\theoremstyle{plain}
\newtheorem{theorem}{Theorem}%[section]
\newtheorem{lemma}{Lema}%[section]
\newtheorem{example}{Example}%[section]
\newtheorem{proposition}{Proposition}%[section]
\newtheorem{corollary}{Corollary}%[section]
\newtheorem*{definition}{Definition}%
\newtheorem{remark}{Remark}%[section] 
\begin{document}

\title{Three-dimensional manifolds with poor spines}

\author{E.~A.~Fominykh, V.~G.~Turaev,  A.~Yu.~Vesnin}

\address{Fominykh: Chelyabinsk State University, Chelyabinsk, Russia; Krasovskii Institute of Mathematics and Mechanics, Yekaterinburg, Russia.} 

\email{fominykh@csu.ru}

\address{Turaev: Chelyabinsk State University, Chelyabinsk, Russia; Indiana University, Bloomington, IN, USA.}

\email{vturaev@yahoo.com}

\address{Vesnin: Chelyabinsk State University, Chelyabinsk, Russia; Sobolev Institute of Mathematics, Novosibirsk, Russia.}

\email{vesnin@math.nsc.ru}

\thanks{The work was supported by the Laboratory of Quantum Topology, Chelyabinsk State University (contract  14.Z50.31.0020 with the Ministry of Education and Sciences of the Russian Federation), by the Russian Foundation for Basic Research (projects 14-01-00441 (Fominykh) and 13-01-00513 (Vesnin), and by a grant of the President of the Russian Federation (project NSh-1015.2014.1).}

\begin{abstract}
A special spine of a three-manifold is said to be \emph{poor} if it does not contain proper simple subpolyhedra. Using the Turaev--Viro invariants, we establish that every compact three-dimensional manifold $M$ with connected nonempty boundary has a finite number of poor special spines. Moreover, all poor special spines of the manifold $M$ have the same number of true vertices. We prove that the complexity of a compact hyperbolic three-dimensional manifold with totally geodesic boundary that has a poor special spine with two 2-components and $n$ true vertices is $n$. Such manifolds are constructed for infinitely many values of $n$. 
\end{abstract}

%%% ----------------------------------------------------------------------

\maketitle

\section{Introduction}

The notion of complexity of a 3-manifold was introduced by S.~Matveev and plays an important role in the theory of 3-manifolds \cite{1}. The tabulation of 3-manifolds of given complexity and the calculation of the exact values of complexity for large classes of manifolds provide a natural approach to the manifold classification problem. The problem of calculating the complexity of manifolds is quite difficult. At present, the exact values of complexity are known only for a finite number of tabulated manifolds \cite{2, 3, 4} and for several infinite families of manifolds with boundary \cite{5, 6, 7, 8}, closed manifolds \cite{9, 10}, and cusped manifolds \cite{11, 12}. Estimates for the complexity of some infinite families of manifolds were obtained in \cite{13, 14, 15, 16, 17}.

In this paper, we prove that if a hyperbolic 3-manifold with totally geodesic boundary has a poor special spine with two 2-components and $n$ true vertices, then its complexity is $n$ (Theorem~4). We construct examples of such manifolds for infinitely many values of $n$. These examples give a new infinite family of 3-manifolds for which the exact values of complexity are known.

The paper is organized as follows. In Section~2, we present the basic notions of the complexity theory of 3-manifolds. In Section~3, we introduce the notion of a poor spine, give examples of hyperbolic 3-manifolds with totally geodesic boundary that have poor special spines, and study the existence of poor spines in manifolds. In Section~4, we establish that every compact 3-manifold with nonempty boundary has a finite number of poor special spines and that these spines have the same number of true vertices (Proposition~4). In Section~5, we introduce classes ${\mathcal M}^{k}_{n}$ of hyperbolic manifolds with totally geodesic boundary that have poor special spines with the number of 2-components equal to $k$ and the number of true vertices equal to~$n$. We prove a theorem on the complexity of manifolds in the classes ${\mathcal M}^{2}_{n}$. In Section~6, we construct an infinite family of such manifolds.

\section{Polyhedra and spines of 3-manifolds} \label{sec2}

Recall the definitions of simple and special polyhedra (we follow the book~\cite{1}).

\begin{definition} {\rm 
A compact polyhedron $P$ is said to be \emph{simple} if the link of each of its points $x \in P$  is homeomorphic to one of the following one-dimensional polyhedra:
\begin{itemize}
\item[(a)] a circle (in this case the point $x$ is said to be \emph{nonsingular}); 
\item[(b)] a union of a circle and a diameter (in this case the point $x$ is called a \emph{triple point});
\item[(c)] a union of a circle and three radii (in this case the point $x$ is called a \emph{true vertex}).
\end{itemize}
}
\end{definition}

We will call the points of types (b) and (c) \emph{singular} points. The set of singular points of a simple polyhedron $P$ is called its \emph{singular graph} and is denoted by $SP$. In the general case, the set $SP$ is not a graph on the set of true vertices of the polyhedron $P$, because $SP$ may contain closed triple lines without true vertices. If there are no closed triple lines, then $SP$ is a $4$-regular graph (each vertex is incident to exactly four edges) that may have loops and multiple edges.

Every simple polyhedron has a natural \emph{stratification}. The strata of dimension 2 (\emph{2-components}) are the connected components of the set of nonsingular points. The strata of dimension $1$ are the open or closed triple lines. The strata of dimension $0$ are the true vertices. It is natural to require that each stratum be a cell, i.e., that a polyhedron $P$ be a cell complex.

\begin{definition} {\rm 
A simple polyhedron $P$ is said to be \emph{special} if the following conditions are satisfied: 
\begin{itemize}
\item[(1)] each one-dimensional stratum of the polyhedron $P$ is an open $1$-cell;
\item[(2)] each $2$-component of the polyhedron $P$ is an open $2$-cell.
\end{itemize}
}
\end{definition} 

It is obvious that if the polyhedron $P$ is connected and contains at least one true vertex, then condition (2) implies condition (1).

\begin{definition}{\rm 
Let $M$ be a connected compact $3$-manifold. A compact two-dimensional polyhedron $P \subset M$ is called a \emph{spine} of $M$ if either $\partial M \neq \emptyset$ and $M \setminus P$ is homeomorphic to $\partial M \times (0,1]$ or  $\partial M = \emptyset$ and $M \setminus P$ is homeomorphic to an open ball. A \emph{spine} of a disconnected $3$-manifold is the union of spines of its connected components.
}
\end{definition}

\begin{definition}{\rm 
A spine of a $3$-manifold is said to be \emph{simple} or \emph{special} if it is a~simple or special polyhedron, respectively.
} 
\end{definition}

The importance of studying special spines of $3$-manifolds is associated with the following facts.

\begin{theorem} \cite{18} 
Any compact $3$-manifold has a special spine.
\end{theorem}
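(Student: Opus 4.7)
The plan is to produce a special spine from a triangulation of $M$. First, I would reduce to the case $\partial M \neq \emptyset$: if $M$ is closed, delete a small open ball $B$ contained in the interior of a 3-simplex, build a special spine $P$ for $M \setminus B$, and observe that $(M \setminus B) \setminus P \cong S^2 \times (0,1]$ reunited with $B$ yields an open 3-ball, so $P$ is automatically a special spine of $M$ as well.

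Next, with $\partial M \neq \emptyset$, I would fix a triangulation $T$ of $M$ and take $P$ to be its dual 2-skeleton inside $M$, built via barycentric subdivision: to each edge $e$ of $T$ not contained in $\partial M$ one associates the barycentric polygon spanning the barycenter of $e$ together with the barycenters of all triangles and tetrahedra incident to $e$. These 2-cells meet along dual arcs indexed by interior triangles and at the barycenters of tetrahedra. The checks that $P$ is simple are local: at the barycenter of a tetrahedron $\sigma$ the link in $P$ is the 1-skeleton of $\partial \sigma$, which is $K_4$ and is homeomorphic to a circle with three radii, so one obtains a true vertex of type (c); along each dual 1-cell exactly three 2-components meet, giving triple points; and the remaining points of $P$ are nonsingular. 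That $P$ is special then follows because each dual 2-cell is an open disk (dual to an edge of $T$) and each dual 1-stratum is an open arc.

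It remains to show that $P$ is a spine, i.e.\ that $M \setminus P \cong \partial M \times (0,1]$. By construction $M \setminus P$ is the union of the open stars of the vertices of $T$, and a radial retraction inside each tetrahedron, pointing from the barycenter toward the vertices of $T$, should extend to a global deformation retraction onto $\partial M$. The main obstacle will be this global step: one must choose the barycentric refinement so that the radial retractions on adjacent tetrahedra agree on their common triangle, and then track carefully what happens near $\partial M$, where the dual cells must terminate on the boundary rather than at additional interior true vertices.
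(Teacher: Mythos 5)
The paper does not prove this statement; it is quoted from Casler's paper \cite{18}, so the only meaningful comparison is with the standard argument there (and in Matveev's book \cite{1}). Your local analysis of the dual $2$-skeleton is essentially right (and indeed $K_4$ is homeomorphic to a circle with three radii), but the proposal has a genuine gap exactly at the step you flag as ``the main obstacle,'' and the obstacle is not merely a matter of choosing compatible radial retractions. By construction $M \setminus P$ is the disjoint union of the open dual $3$-cells of the vertices of $T$: one open ball for each interior vertex (disjoint from $\partial M$) and one half-open ball for each boundary vertex (meeting $\partial M$ in the open star of that vertex, an open disk). No choice of retraction can make this set homeomorphic to $\partial M \times (0,1]$ unless the triangulation has no interior vertices and exactly one vertex on each boundary component --- and the existence of such triangulations is not something you may assume for free; it is essentially equivalent in difficulty to the theorem being proved. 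The same problem reappears after your (otherwise correct) reduction of the closed case to the bounded case.

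The classical proof repairs precisely this defect. One starts from the union of the boundaries of the handles of a handle decomposition derived from the triangulation (or, equivalently, from your dual $2$-skeleton together with the extra walls separating the vertex balls from the boundary collar); its complement is a disjoint union of open balls. One then \emph{punctures} the polyhedron, removing one open $2$-cell for each ball so as to merge all complementary components into a single collar of $\partial M$ (respectively, a single open ball in the closed case). Puncturing can destroy the special (and even simple) structure --- adjacent $2$-components merge, and some may cease to be cells --- so a further step (Matveev's ``arch construction'') is needed to restore a special polyhedron without changing the complement. Without the puncturing step your $P$ is a special polyhedron but not a spine; without the arch construction the punctured polyhedron is a spine but need not be special. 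Both steps are the actual content of the theorem and are missing from the proposal.
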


\begin{theorem} \cite{1} 
If two compact connected $3$-manifolds have homeomorphic special spines and both manifolds are either closed or have nonempty boundaries, then these manifolds are homeomorphic.
\end{theorem}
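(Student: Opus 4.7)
The plan is to reconstruct each manifold $M_i$ from its special spine $P_i$ via a canonical thickening and then account for the complement $M_i \setminus P_i$ using the spine condition of Definition~3. Given a homeomorphism $h \colon P_1 \to P_2$, note first that $h$ automatically preserves the stratification, since nonsingular points, triple points and true vertices are distinguished by their links and hence are local homeomorphism invariants.

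The core step is to extend $h$ to a homeomorphism $\tilde h \colon N(P_1) \to N(P_2)$ between PL regular neighborhoods taken inside $M_1$ and $M_2$. I would build this extension stratum by stratum: over each $2$-component (an open $2$-cell) the regular neighborhood is an $I$-bundle over a disk, hence a $3$-ball; over each open triple line (an open $1$-cell) it is the product of the triod $Y$ with an interval; and at each true vertex it is the cone on the $2$-sphere that is decomposed by the graph $K_4$ (the link from Definition~1(c)) into four triangular disks. Each local model is rigid, and the specialness hypothesis --- every $1$-stratum and $2$-stratum is an open cell --- rules out any monodromy or framing ambiguity when gluing neighbouring local models along the singular graph $SP$. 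Consequently the extension $\tilde h$ exists and is unique up to isotopy.

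With $\tilde h \colon N(P_1) \to N(P_2)$ in hand, the two cases of the theorem are separated. If both $M_i$ have nonempty boundary, the condition $M_i \setminus P_i \cong \partial M_i \times (0,1]$ implies that $M_i$ deformation retracts onto $P_i$, so $M_i$ \emph{is} a regular neighborhood of $P_i$, and $\tilde h$ itself is the desired homeomorphism $M_1 \to M_2$. If instead both $M_i$ are closed, then $M_i \setminus P_i$ is an open $3$-ball, so $\partial N(P_i) \cong S^2$ and $M_i = N(P_i) \cup_{S^2} B^3$; since every self-homeomorphism of $S^2$ extends to $B^3$ by Alexander's trick, $\tilde h$ extends across the capping balls to a global homeomorphism $M_1 \to M_2$.

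The main obstacle is the thickening step: one must verify that the local $3$-ball models at true vertices assemble unambiguously with the models over the adjacent triple lines and $2$-components. For a general \emph{simple} polyhedron this rigidity can fail, and the specialness of $P$ is precisely what makes the thickening canonical --- the openness of each cell forbids twisting, and the combinatorics of $K_4$ at each true vertex prescribes a single compatible way to glue. Once this local-to-global step is justified, the remaining arguments in the two cases are routine.
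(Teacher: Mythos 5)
The paper does not actually prove this statement: it is quoted from Matveev's book \cite{1} (it is Casler's theorem combined with the uniqueness of thickenings of special polyhedra), so there is no in-paper argument to compare you against. Your outline coincides with the standard proof in \cite{1}: extend the homeomorphism $P_1\to P_2$ to a homeomorphism of regular neighborhoods, note that in the boundary case $M_i\cong N(P_i)$, and in the closed case cap off $\partial N(P_i)\cong S^2$ by a ball using Alexander's trick. Those two closing steps are indeed routine (modulo the small check, which you omit, that in the closed case the closure of $M_i\setminus N(P_i)$ is a $3$-ball; this follows from $M_i\setminus P_i$ being an open ball).

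The gap is the one you flag yourself, and it is not a side issue: the claim that the local models assemble without monodromy or framing ambiguity \emph{is} the theorem, and ``each local model is rigid'' is a restatement, not an argument. Concretely, two things must be verified. First, for each edge of $SP$ the beam $Y\times I$ attaches to the balls at its endpoints in the way prescribed by the germ of $P$ along the closed edge; here one uses that the edge is an open $1$-cell rather than a circle, since a $Y$-bundle over $S^1$ admits twisted forms. Second, for each $2$-component the plate $D^2\times I$ is glued along an annulus $A$ in the boundary of the union of balls and beams, and the gluing $\partial D^2\times I\to A$ must be shown to be determined up to isotopy: the danger is a Dehn twist along the core of $A$, and it is excluded not merely because the $2$-component is a disk but because the plate has to extend the normal $I$-bundle over the wings of $P$ along $SP$, which fixes the product structure on $A$ and forces the gluing to be fiber-preserving. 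Until these two points are written out, the thickening step is asserted rather than proved; once they are, your argument is exactly the one in \cite{1}.
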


Note that not any special polyhedron defines a $3$-manifold. Namely, there exist unthickenable polyhedra that cannot be embedded in $3$-manifolds \cite{1}.

A compact polyhedron $P$ is said to be \emph{almost simple} if the link of each of its points can be embedded in a complete graph $K_{4}$ on four vertices. The points whose links are homeomorphic to the graph $K_{4}$ are true vertices of the polyhedron $P$. A spine of a manifold is said to be \emph{almost simple} if it is an almost simple polyhedron. We say that the complexity $c(M)$ of a manifold M is equal to $n$ if $M$ has an almost simple spine with $n$ true vertices and has no almost simple spines with a smaller number of true vertices.

\begin{theorem} \cite{1} 
Suppose that a compact irreducible boundary-irreducible $3$-manifold $M$ is such that $M \neq D^{3}, S^{3}, {\mathbb R}P^{3}, L_{3,1}$  and all proper annuli in $M$  are inessential. Then, for any almost simple spine $P$ of $M$, one can find a special spine $P_{1}$ of $M$ with a smaller or the same number of true vertices.
\end{theorem}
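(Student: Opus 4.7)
The plan is to transform the given almost simple spine $P$ into a special spine $P_1$ by a finite sequence of local modifications, each of which does not increase the number of true vertices. The argument naturally splits into two stages: first reducing from almost simple to simple, then from simple to special, while all along relying on the topological hypotheses to rule out the obstructions that would otherwise force the creation of new true vertices.

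For the first stage, I would observe that, at an almost simple but non-simple point, the link is a proper subgraph of $K_4$ different from a circle or a circle with a diameter. Such points sit at free faces or free edges of $P$. The natural move is a collapse: a $2$-cell with a free boundary edge can be pushed off, and a free arc of $1$-stratum can be removed. Since true vertices carry a full $K_4$ link, they cannot be created by such collapses; they can only be destroyed or left in place. Iterating this process terminates in a simple spine $P'$ of $M$ with at most as many true vertices as $P$.

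For the second stage, the obstructions to speciality in a simple spine are of two kinds: (i) $2$-components that fail to be open $2$-cells (they have positive genus, extra boundary, or handles), and (ii) closed $1$-strata without true vertices (circular triple lines bounding annular or disk regions). Following the strategy of Matveev, I would eliminate each defect by a local cut-and-paste move: for a non-cellular $2$-component $F$, choose a properly embedded arc or circle in $F$ whose frontier in $M$ gives either a $2$-sphere, a boundary-parallel disk, or a proper annulus in $M$; by the hypotheses of irreducibility, boundary-irreducibility, inessentiality of annuli, and the exclusion of $D^3, S^3, \mathbb{R}P^3, L_{3,1}$, each such surface bounds a ball, a collar, or is itself trivially reducible, so the corresponding piece of $P'$ can be removed or reglued without introducing true vertices. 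Closed triple lines are handled analogously, using that the $2$-components they bound admit disk-like fillings in $M$.

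The main obstacle will be the bookkeeping in stage two: one must verify that the cut-and-paste moves genuinely preserve the property of being a spine of $M$ and that the local modification does not silently create a new $K_4$-link at some interior point of $P'$. This is exactly the delicate part of Matveev's argument, where each topological hypothesis is invoked to certify that the modification can be carried out inside $M$ rather than in a more complicated ambient manifold. Once this is established, an induction on, say, the total Euler characteristic deficit of the $2$-components terminates the procedure at a special spine $P_1$ with at most as many true vertices as $P$.
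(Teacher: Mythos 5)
The paper does not actually prove this statement: it is quoted from Matveev's book \cite{1}, so there is no in-paper proof to compare against; the comparison below is with the known argument there. Your outline has the right overall shape (remove the defects of an almost simple spine one at a time without increasing the number of true vertices, invoking the topological hypotheses to kill the obstructions), but there is a genuine gap already in stage one. It is not true that every almost simple, non-simple point sits at a free face or free edge: an almost simple polyhedron may contain $1$-dimensional strata forming circles, or arcs with both endpoints attached to the $2$-dimensional part, and such configurations admit no collapse yet are not simple. Eliminating them is not a collapse but a genuine surgery --- a regular neighbourhood of such an arc or circle is a $1$-handle or a solid torus attached to the complementary collar, and one must either discard it or cap it off with a disk; this is precisely where irreducibility, boundary-irreducibility and the absence of essential annuli are used, and it is also where the excluded manifolds $D^{3}, S^{3}, {\mathbb R}P^{3}, L_{3,1}$ enter (their minimal almost simple spines --- a point, a point, ${\mathbb R}P^{2}$, the triple hat --- have no true vertices and cannot be promoted to special spines without creating some). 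Your sketch never engages with this, and never explains where the four exceptions are actually needed.

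Stage two, as you concede, is not an argument but an announcement that the delicate part exists. The claim that each compression ``can be removed or reglued without introducing true vertices'' is exactly the content that must be verified move by move, and the proposed termination measure (total Euler characteristic deficit of the $2$-components) is not obviously monotone under moves that can split and merge $2$-components and create or destroy closed triple lines. As written, the proposal is an accurate table of contents for Matveev's proof rather than a proof.
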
 

In particular, this theorem applies to compact hyperbolic $3$-manifolds. Thus, for such manifolds, the spines on which the minimum number of true vertices is attained can be sought in the class of special spines.

Now, following~\cite{6}, we describe the relation between special spines and triangulations of compact manifolds. An \emph{ideal tetrahedron} is a tetrahedron with removed vertices. By an \emph{ideal triangulation}  of a compact manifold $M$ with boundary one means a representation of the interior of $M$ as a set of ideal tetrahedra glued together via pairwise identifications of their faces. Each ideal triangulation of a manifold $M$ naturally defines a dual special spine. Namely, suppose that the manifold $M$ is obtained by gluing ideal tetrahedra $T_{1}, \ldots, T_{d}$. In each tetrahedron $T_{i}$, consider a union $R_{i}$ of the links of all four (removed) vertices of the tetrahedron in its first barycentric subdivision (see Fig.~1). To the gluing of the tetrahedra $T_{1}, \ldots, T_{d}$, one assigns the gluing of the polyhedra $R_{1}, \ldots, R_{d}$.  As a result, one obtains a special spine $P = \cup_{i=1}^{d} R_{i}$ of the manifold $M$. The above assignment induces a bijection between ideal triangulations (considered up to equivalence) and special spines (considered up to homeomorphisms).

\begin{figure}[h]
\centering
\includegraphics[scale=0.7]{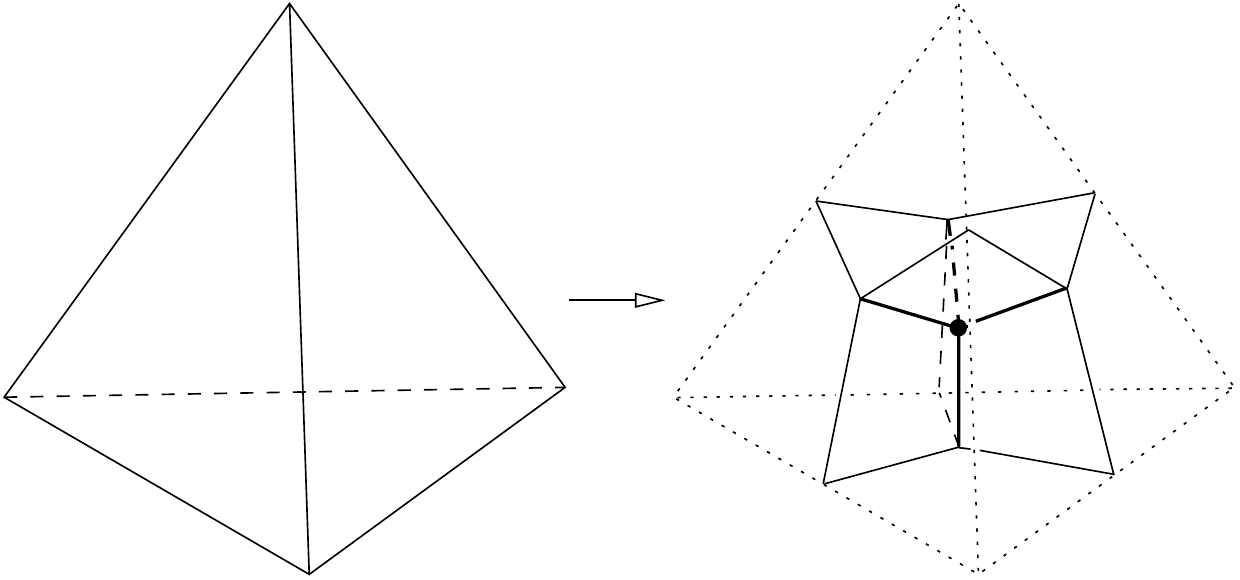}
\caption{Tetrahedron $T_{i}$ and polyhedron $R_{i}$.} \label{fig: dualspine}
\end{figure}

Recall the notion of a hyperbolic truncated tetrahedron~\cite{6}. Let $T$ be a tetrahedron and $T^{*}$ be a combinatorial polyhedron obtained from $T$ by removing sufficiently small open stars of its vertices. A \emph{lateral hexagon} and a \emph{truncation triangle} are the intersection of $T^{*}$ with a face and a link of a vertex of $T$, respectively. The edges of truncation triangles are called \emph{boundary edges}, and the other edges of $T^{*}$ are called \emph{internal edges}. A \emph{hyperbolic truncated tetrahedron} is the realization of $T^{*}$ as a compact polyhedron in $\mathbb H^{3}$ such that the truncation triangles are geodesic triangles, the lateral hexagons are geodesic hexagons, and the truncation triangles and lateral hexagons lie in the planes making right angles with each other. A truncated tetrahedron is said to be \emph{regular} if all its dihedral angles at internal edges are equal. For every $\theta$ such that $0 < \theta < \pi/3$, there exists a unique regular truncated tetrahedron $T^{*}_{\theta}$ with dihedral angle $\theta$. The lengths of all lateral edges in $T^{*}_{\theta}$ are equal, just as the lengths of all internal edges.

Formulas for the volume of a regular truncated hyperbolic tetrahedron were obtained in~\cite{19}. The first of them is
\begin{equation} \label{eq1} 
\operatorname{vol} (T^{*}_{\theta}) =  8 \Lambda \left(\frac{\pi}{4} \right) - 3 \int_{0}^{\theta} \operatorname{arccosh} \left( \frac{\cos t}{2 \cos t - 1} \right) \, dt ,
\end{equation}
where $\Lambda (x) = - \int_{0}^{x} \ln | 2 \sin \zeta | d \zeta$ is the Lobachevsky function. The second formula express the volume in terms of values of the Lobachevsky function: 
\begin{equation} \label{eq2} 
\begin{gathered}
\operatorname{vol} (T^{*}_{\theta}) = 6 \left[ \Lambda \left( \frac{\pi}{3} + \varphi \right) - \Lambda \left( \frac{\pi}{3} - \varphi \right) + \Lambda \left( \frac{5\pi}{6} - \varphi \right) + \Lambda \left( \frac{\pi}{6} - \varphi \right) \right. \\ 
\left. + \Lambda \left( \frac{\theta}{2} + \varphi \right) - \Lambda \left( \frac{\theta}{2} - \varphi \right) + 2 \Lambda \left( \frac{\pi}{2} - \varphi \right)  \right] ,
\end{gathered}
\end{equation}
where $\varphi = \arctan \frac{\sqrt{1 - 3 \sin^{2} \theta/2}}{\cos \theta/2}$. 

\section{Poor spines} \label{sec3}

Among all simple polyhedra, we distinguish a class of those that do not contain proper simple subpolyhedra.

\begin{definition}
{\rm  A simple polyhedron is said to be \emph{poor} if it does not contain proper simple subpolyhedra.}
\end{definition}

It follows from the definition that a poor polyhedron is always connected. Below we will consider only connected simple polyhedra.

\begin{definition}
{\rm A simple spine of a $3$-manifold is said to be \emph{poor} if it is a poor polyhedron.}
\end{definition}

It is obvious that if a manifold is disconnected, then any of its simple spines is not poor. Below we will consider only connected manifolds. We will focus on the special spines of these manifolds.

\begin{lemma}  \label{lemma1}  
Any special polyhedron with one 2-component is poor.
\end{lemma}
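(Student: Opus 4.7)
My plan is to argue by contradiction. Assume $P$ is a special polyhedron with a unique 2-component $C$ and that $Q \subsetneq P$ is a proper simple subpolyhedron; I will rule out both the possibility that $Q$ misses $C$ and the possibility that it meets $C$.

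The central local step is to analyze the link of $Q$ at a point $x$ lying in $C$. The link of $x$ in $P$ is a circle, so the link of $x$ in $Q$ is some compact subspace of $S^{1}$. Of the three link types allowed by the definition of a simple polyhedron --- a circle, a circle with a diameter, a circle with three radii --- only the first embeds in $S^{1}$, and only as the whole $S^{1}$. Hence $Q$ contains an open neighborhood in $P$ of every one of its nonsingular points. Consequently $Q \cap C$ is open in $C$; it is also closed in $C$ because $Q$ is closed in $P$. Since $C$ is a connected 2-cell, either $Q \cap C = \emptyset$ or $C \subset Q$.

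The first alternative forces $Q \subset SP$, which is one-dimensional, so the links of points of $Q$ would be zero-dimensional --- incompatible with the three permitted link types. In the second alternative I would show that $\overline{C} = P$: any singular point $y \in SP$ has a neighborhood in $P$ composed of three or six half-disks (depending on whether $y$ is a triple point or a true vertex) meeting along 1-strata, and each of these half-disks belongs to some 2-component of $P$; since $C$ is the only 2-component, each such half-disk lies in $C$ and therefore $y \in \overline{C}$. Because $Q$ is closed and contains $C$, it then contains $\overline{C} = P$, contradicting $Q \subsetneq P$.

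The only step where anything nonobvious happens is the link computation at a nonsingular point; it is what upgrades $Q$ from being merely a closed subpolyhedron to being a \emph{union of strata} of $P$. Once that observation is in place, everything else follows from the classification of links in a simple polyhedron and the connectedness of $C$, so no further technical obstacle is anticipated.
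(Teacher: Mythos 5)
Your proof is correct and follows essentially the same route as the paper's: the paper likewise observes that a simple subpolyhedron containing a point of the unique $2$-component must contain all of it (your open--closed link argument is exactly what the paper compresses into ``by the compactness of simple polyhedra''), and hence is either empty or all of $P$. You have merely filled in the details the paper leaves implicit (the link classification at a nonsingular point, the impossibility of $Q\subset SP$, and $\overline{C}=P$), so there is nothing to object to.
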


\begin{proof} 
Indeed, suppose that a special polyhedron $P$ has exactly one $2$-component, which we denote by $\xi$. If a simple polyhedron $Q \subset P$ contains at least one point of the $2$-component $\xi$ of $P$, then $\xi \subset Q$ by the compactness of simple polyhedra. Hence, the polyhedron $Q$ is either empty or coincides with $P$. 
\end{proof}

Consider examples of $3$-manifolds that have special spines with one $2$-component. 

\begin{example} \label{example1} 
{\rm  In~\cite{6}, for every $n \geq 2$, the authors defined and studied a class $\mathcal M_{n}$. An orientable $3$-manifold belongs to the class $\mathcal M_{n}$ if it has a special spine with $n$ true vertices and exactly one $2$-component. Manifolds of the class $\mathcal M_{n}$ possess the following characteristic properties. If $M \in \mathcal M_{n}$,  then the manifold $M$ is hyperbolic with totally geodesic boundary of genus $n$, its complexity $c(M)$ is equal to $n$, and its volume is calculated by the formula
\begin{equation} \label{eq3} 
\operatorname{vol} (M) = n \cdot \left[ 8 \Lambda \left(\frac{\pi}{4} \right) - 3 \int_{0}^{\pi/(3n)} \operatorname{arccosh} \left( \frac{\cos t}{2 \cos t - 1} \right) \, dt \right] .
\end{equation} 
The number of manifolds in $\mathcal M_{n}$ grows exponentially as $n \to \infty$.}
\end{example}

\begin{example} \label{example2} 
{\rm M.~Fujii~\cite{20} showed that there exist exactly eight different compact orientable hyperbolic $3$-manifolds  with totally geodesic boundary which are obtainable by gluing together two hyperbolic truncated tetrahedra, and that the boundary of each of them is a surface of genus $2$. He gave explicit descriptions of truncated triangulations of these manifolds. It follows from the formula for the Euler characteristic that the special spines dual to the triangulations have one $2$-component. Thus, Fujii's eight manifolds constitute the class $\mathcal M_{2}$ from Example~\ref{example1}. By Lemma~1, their spines are poor.
}  
\end{example}

\begin{example}  \label{example3} {\rm 
In~\cite{3}, R.~Frigerio, B.~Martelli, and C.~Petronio classified compact orientable hyperbolic $3$-manifolds with totally geodesic boundary that have complexity $3$ and $4$. Truncated triangulations of these manifolds (which are not necessarily minimal) are presented on the homepage of C.~Petronio~\cite{24}. There are $150$  manifolds of complexity $3$. Among them $74$ manifolds have boundary of genus $3$ and belong to the class $\mathcal M_{3}$ described in Example~\ref{example1}. Hence, they have poor special spines. The results of computer analysis carried out by V.V.~Tarkaev have allowed us to find out that among special spines dual to the remaining $76$ truncated triangulations, $45$ spines are poor (see Table~1).

 \begin{table}[h!]
 \begin{center}
\caption{Number of poor spines of manifolds of complexity $3$} 
\label{comp3}
\begin{tabular}{|c|c|c|} \hline
Genus of the boundary & Total number of spines & Number of poor spines \\ \hline
 $3$ &   $74$ & $74$ \\ \hline
 $2$ &  $76$ & $45$ \\ \hline
Total &   $150$ & $119$ \\ \hline
 \end{tabular}
\end{center}
\end{table}%

There are $5002$ manifolds of complexity $4$. Among them, $2340$ manifolds have boundary of genus $4$ and belong to the class $\mathcal M_{4}$ described in Example~\ref{example1}. Hence, they have poor special spines. $2034$ manifolds have boundary of genus $3$, and $628$ manifolds have boundary of genus $2$. The computer analysis carried out by V.V.~Tarkaev has allowed us to find out that among $2662$ special spines dual to the truncated triangulations of these manifolds, $2010$ spines are poor. More detailed data according to the type of the Kojima decomposition are given in Table~2. 

 \begin{table}[h!]
 \begin{center}
\caption{Number of poor spines of manifolds of complexity $4$}
\label{comp4}
\begin{tabular}{|c|c|c|c|}
\hline
Genus   & Kojima decomposition & Total number  & Number  \\
of the boundary & & of spines & of poor spines \\
\hline
 $4$ &  & $2340$ & $2340$ \\ \hline
 $3$ & Four tetrahedra & $1936$ & $1421$ \\ \hline
 $3$ & Five tetrahedra & $42$ & $0$ \\ \hline
 $3$ & $\pi/6$-octahedron  & $56$ & $0$ \\ \hline
 $2$ & Four tetrahedra & $555$ & $525$ \\ \hline
 $2$ & Five tetrahedra & $41$ & $33$ \\ \hline
 $2$ & Six tetrahedra & $3$ & $3$ \\ \hline
 $2$ & Eight tetrahedra & $3$ & $3$ \\ \hline
 $2$ & $\pi/3$-octahedron & $14$ & $13$ \\ \hline
 $2$ & Octahedron  & $8$ & $8$ \\ \hline
 $2$ & Two pyramids & $4$ & $4$ \\ \hline
 Total &  & $5002$ & $4350$ \\ \hline
 \end{tabular}
\end{center}
\end{table}% 
}
\end{example}

Note the following fact obtained from the analysis of Table~2.

\begin{remark} 
A poor special spine of a hyperbolic $3$-manifold may not be minimal with regard to the number of true vertices.
\end{remark} 

The notion of a poor spine introduced above allows us to reformulate Corollary~12 from~\cite{21} as follows.

\begin{proposition}
Among all special spines of closed $3$-manifolds, there exists exactly one poor spine, the minimal special spine of the lens space $L_{5,2}$. 
\end{proposition}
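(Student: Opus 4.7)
The plan is to derive this proposition by translating Corollary~12 of~\cite{21} into the language of poor spines; the combinatorial substance of the result already lies in~\cite{21}, and the work here is to verify that its hypothesis is equivalent to the absence of proper simple subpolyhedra.

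First, I would check existence. The minimal special spine of $L_{5,2}$ has one true vertex; $4$-regularity of the singular graph forces $E = 2V$, and combining with $\chi(L_{5,2}) = 0$ shows that the number of $2$-components equals $V = 1$. Lemma~\ref{lemma1} then shows at once that this spine is poor.

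For uniqueness, let $P$ be a poor special spine of a closed $3$-manifold $M$. Here Corollary~12 of~\cite{21} provides the key content: it singles out the minimal spine of $L_{5,2}$ as the only special spine of a closed $3$-manifold from which no $2$-component can be removed without destroying the simple-polyhedron structure. I would then match this with the property of being poor by showing that, for a special spine of a closed $3$-manifold, every proper simple subpolyhedron arises (possibly after a collapse) from the deletion of one or more $2$-components. After this translation, Corollary~12 identifies $P$ with the minimal spine of $L_{5,2}$; alternatively, one may argue that $P$ must then have a single $2$-component, hence a single true vertex, so $M$ has complexity~$1$, and appeal to Matveev's classification of closed complexity-one manifolds.

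The main obstacle is the careful matching between the hypothesis of Corollary~12 and the definition of a poor spine: one must check, by inspecting the links of points in a special polyhedron, that removing a $2$-component produces a simple subpolyhedron (after possible collapses) exactly when that $2$-component participates in a proper simple subpolyhedron. Once this dictionary is in place, the proposition is an immediate rephrasing of Corollary~12.
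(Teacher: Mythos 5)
The paper offers no proof of this proposition at all: it is stated as a direct reformulation of Corollary~12 of~\cite{21}, so your overall strategy of deferring the substance to that corollary is exactly the intended one. The ``dictionary'' you want is also essentially the observation already made in the proof of Lemma~1: a simple subpolyhedron of a special polyhedron that meets a $2$-component must contain it, so simple subpolyhedra are precisely those unions of closures of $2$-components all of whose links are of the three allowed types (no collapses are involved). To that extent your plan is sound.

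However, the concrete details you supply contain a genuine error. The identity $\chi(P)=\chi(M)$ quoted in Section~4 holds only for manifolds with nonempty boundary; for a special spine $P$ of a \emph{closed} $3$-manifold one has $\chi(P)=\chi(M)+1=1$, since $M$ is obtained from a regular neighborhood of $P$ by attaching a ball along a $2$-sphere. Combined with $E=2V$ this gives $F=V+1$, so the minimal special spine of $L_{5,2}$ (one true vertex) has \emph{two} $2$-components, not one. Consequently Lemma~1 does not apply, and your existence argument fails as written: poorness of this spine must be checked directly, by verifying that neither of the two $2$-component closures is itself a simple polyhedron. The same miscount undermines your alternative uniqueness route: being poor does not force a single $2$-component (Sections~5 and~6 of the paper are devoted entirely to poor spines with two $2$-components), and a special spine of a closed manifold with a single $2$-component would have zero true vertices, so the step ``one $2$-component, hence one true vertex, hence complexity one'' is not available. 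The uniqueness therefore rests entirely on Corollary~12 of~\cite{21}, which neither you nor the paper reproves.
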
 

\begin{proposition}
Let $M$ be a connected compact $3$-manifold whose boundary consists of $k \geq 2$ components. Then no special spine of the manifold $M$ is poor.
\end{proposition}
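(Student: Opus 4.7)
The plan is to locate a proper simple subpolyhedron inside $P$ by extracting a nonzero mod-$2$ cellular $2$-cycle on $P$ and showing that its support is a closed surface strictly contained in $P$.

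The first step is the existence of the cycle. Since $P$ is a spine, $M \setminus P \cong \partial M \times (0,1]$, so the inclusion $P \hookrightarrow M$ is a homotopy equivalence and $H_2(P; \mathbb{Z}/2) \cong H_2(M; \mathbb{Z}/2)$. Mod-$2$ Lefschetz duality combined with the long exact sequence of the pair $(M,\partial M)$ gives
\[
\dim H_2(M; \mathbb{Z}/2) = \dim H^1(M, \partial M; \mathbb{Z}/2) \geq k-1 \geq 1,
\]
since the restriction $H^0(M; \mathbb{Z}/2) \to H^0(\partial M; \mathbb{Z}/2)$ is the rank-$1$ diagonal. Pick a nonzero $2$-cycle $Z = \sum_{D \in I} D$ in $C_2(P; \mathbb{Z}/2)$ and set $|Z| := \bigcup_{D \in I} \overline{D}$.

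The second step is to verify that $|Z|$ is a simple polyhedron. At a triple line $L$, the $\theta$-link has three arcs, and the cycle condition at $L$ forces the number of those arcs lying in sheets of $I$ to be even, hence $0$ or $2$; the link of $L$ in $|Z|$ is therefore empty or $S^1$. At a true vertex $v$, the link in $P$ is $K_4$, and the cycle condition at the four incident triple lines forces the subgraph of $K_4$ cut out by sheets of $I$ to have every vertex of even degree, that is, to lie in the mod-$2$ cycle space of $K_4$. This cycle space has dimension $3$ and enumerates as the empty graph, the four triangles, and the three Hamiltonian $4$-cycles --- each of which is empty or topologically $S^1$. Hence $|Z|$ is a closed $2$-manifold, in particular a simple polyhedron.

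The final step is properness. The total chain $\sum_D D$ has boundary $\sum_L 3\, L \equiv \sum_L L \pmod 2$, which is nonzero because every connected special spine contains a triple line (otherwise it would be a closed surface, contradicting the $2$-cell condition on its unique $2$-component). Hence no nonzero cycle can include every $2$-component, so at least one $2$-cell of $P$ lies outside $|Z|$ and $|Z| \subsetneq P$. The main technical point is the link analysis at true vertices; the key observation I rely on is that every mod-$2$ cycle in $K_4$ is topologically empty or a single circle, which a direct enumeration confirms.
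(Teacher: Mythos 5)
Your proof is correct, but it reaches the separating surface by a homological route where the paper argues directly and geometrically. The paper takes one component $B_1$ of $M\setminus P$, lets $B_2$ be the union of the others, and declares the desired subpolyhedron to be the union of the closures of those $2$-components lying in $\overline{B_1}\cap\overline{B_2}$; this is exactly (the support of) the Lefschetz dual of the cocycle separating $B_1$ from $B_2$, i.e.\ a specific instance of the nonzero class in $H_2(M;\mathbb{Z}/2)\cong H^1(M,\partial M;\mathbb{Z}/2)$ that you produce from the long exact sequence. What your version buys is an explicit verification, via the parity of wings along triple lines and the enumeration of the cycle space of $K_4$ at true vertices, that the support of a mod-$2$ cellular $2$-cycle is a closed surface and hence a simple polyhedron --- a point the paper's proof asserts without comment. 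What the paper's version buys is brevity and geometric transparency: nonemptiness of the blue surface is immediate from connectedness of $M$, and properness is immediate because a closed surface cannot contain the true vertices of a special polyhedron as singular points (your properness argument via $\partial\bigl(\sum_D D\bigr)=\sum_L L\neq 0$ is also valid, just less direct --- note that you could have simply observed that $|Z|$ has no singular points while $P$, being special, does). Both arguments are sound; yours is a legitimate, slightly more formal rendering of the same underlying idea.
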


\begin{proof} 
Let $P$ be a special spine of $M$. Then the space $M \setminus P$ consists of $k \geq 2$ connected components. Denote by $B_{1}$ one of these components and by $B_{2}$ the union of the remaining components. Assign blue color to those $2$-components of the spine $P$ that lie in the intersection of the closure of the space $B_{1}$ with the closure of the space $B_{2}$. Then the union of the closures of blue $2$-components of the spine $P$ is a proper simple subpolyhedron of this spine.
\end{proof}

\begin{corollary}
Let a $3$-manifold $M$ have a poor special spine $P$ different from the minimal special spine of the lens space $L_{5,2}$. Then the boundary of $M$ has exactly one component and this component is different from the sphere.
\end{corollary}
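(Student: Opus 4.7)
The plan is to combine the two preceding propositions with a simple capping argument. First I would use Proposition 2: since $M$ admits a poor special spine, the boundary $\partial M$ cannot have two or more connected components, so $\partial M$ has either zero or one component. To rule out the closed case, I would invoke Proposition 1, which identifies the unique poor special spine of a closed $3$-manifold as the minimal special spine of $L_{5,2}$; since $P$ is assumed to differ from this spine, $M$ cannot be closed. Hence $\partial M$ has exactly one component.

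It remains to exclude the case $\partial M \cong S^2$. The key observation is that poorness is a property of $P$ as an abstract polyhedron, independent of the ambient manifold. So the idea is to cap off the spherical boundary and reduce to Proposition~1. More precisely, if $\partial M \cong S^2$, then by Definition~3 the complement $M \setminus P$ is homeomorphic to $S^2 \times (0,1]$. Gluing a $3$-ball $D^3$ to $M$ along $\partial M$ produces a closed $3$-manifold $\widetilde M$ in which
\[
\widetilde M \setminus P = (M \setminus P) \cup_{S^2} D^3 \cong S^2 \times (0,1] \cup_{S^2} D^3,
\]
which is an open $3$-ball. Thus $P$ is a special spine of the closed manifold $\widetilde M$, and it is still poor as a polyhedron.

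Applying Proposition~1 to $\widetilde M$ and $P$, we conclude that $\widetilde M \cong L_{5,2}$ and $P$ is the minimal special spine of $L_{5,2}$, contradicting the hypothesis on $P$. Therefore $\partial M$ cannot be a sphere, and the corollary follows.

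The argument is essentially bookkeeping on top of Propositions~1 and~2; the only substantive point is the capping step, which relies on the fact that a special spine whose complement is a collar of a spherical boundary becomes, after attaching a $3$-ball, a special spine of the resulting closed manifold. I do not anticipate any real obstacle beyond verifying this homeomorphism, which is immediate from Definition~3.
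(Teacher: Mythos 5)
Your argument is correct and follows exactly the route the paper intends: the corollary is stated as an immediate consequence of Propositions~1 and~2, with Proposition~2 excluding multiple boundary components, Proposition~1 excluding the closed case, and the capping-off step (attaching $D^3$ along a spherical boundary component turns the collar $S^2\times(0,1]$ into an open ball, so $P$ becomes a poor special spine of a closed manifold) reducing the $\partial M\cong S^2$ case to Proposition~1 as well. The paper leaves the proof implicit, and your write-up supplies precisely the missing details, including the one substantive verification (that the capped complement is an open ball), which checks out.
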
 

\section{Finiteness of the number of poor spines of a manifold} \label{sec4}

A particular case of the Turaev--Viro invariants of $3$-manifolds~\cite{22} is the $\varepsilon$--invariant, which is
defined as follows (see~\cite{1}). Let $P$ be a special spine of a compact manifold $M$. Denote by $\mathcal{F}(P)$ the set of all its simple subpolyhedra, including $P$ and the empty set. To each simple polyhedron $Q\subset P$, assign its $\varepsilon$-weight by the formula  
$$
w_{\varepsilon}(Q) = (-1)^{V(Q)}\varepsilon^{\chi(Q)-V(Q)},
$$
where $V(Q)$ is the number of true vertices in $Q$, $\chi(Q)$ is the Euler characteristic of the polyhedron $Q$, and  $\varepsilon = (1+\sqrt{5})/2$ is a solution of the equation $\varepsilon^2=\varepsilon+1$. Then the $\varepsilon$-invariant  $t(M)$ of the manifold $M$ is defined by the formula 
$$
t(M) = \sum_{Q\in \mathcal{F} (P)} w_{\varepsilon}(Q).
$$

It is known that the Euler characteristic $\chi(M)$ of a compact manifold $M$ with nonempty boundary is equal to the Euler characteristic $\chi(P)$ of any of its spines. The definition of the $\varepsilon$-invariant implies the following proposition. 

\begin{proposition} \label{prop-formula}
Let $M$ be a compact $3$-manifold that has a poor special spine with $V$ true vertices. Then
$$
t(M) = (-1)^{V} \varepsilon^{\chi(M) - V} +1 . 
$$
\end{proposition}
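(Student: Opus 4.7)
The plan is to observe that the definition of a \emph{poor} spine makes the sum defining $t(M)$ collapse to just two terms, so the whole proof reduces to evaluating those two weights. The setup is: $P$ is a poor special spine of $M$ with $V$ true vertices, and by hypothesis $P$ has no proper simple subpolyhedra. Moreover, as the excerpt explicitly notes in the comment preceding the proposition, $\chi(P) = \chi(M)$ because $M$ has nonempty boundary (a poor spine exists only when the boundary is nonempty and connected, by Proposition~2 and its corollary).

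First, I would identify the family $\mathcal{F}(P)$. By definition it consists of \emph{all} simple subpolyhedra of $P$ together with $P$ itself and the empty set. Since $P$ is poor, no proper simple subpolyhedron exists, so
\[
\mathcal{F}(P) = \{\, \emptyset,\, P \,\}.
\]
Hence the defining sum reduces to
\[
t(M) = w_{\varepsilon}(\emptyset) + w_{\varepsilon}(P).
\]

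Next I would compute each of the two weights. For the empty polyhedron we have $V(\emptyset) = 0$ and $\chi(\emptyset) = 0$, so by the formula for $w_{\varepsilon}$
\[
w_{\varepsilon}(\emptyset) = (-1)^{0}\,\varepsilon^{0-0} = 1.
\]
For $P$ itself, $V(P) = V$ by assumption and $\chi(P) = \chi(M)$ by the preceding remark, so
\[
w_{\varepsilon}(P) = (-1)^{V}\,\varepsilon^{\chi(M)-V}.
\]
Adding the two weights yields $t(M) = (-1)^{V}\,\varepsilon^{\chi(M)-V} + 1$, which is the claim.

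There is essentially no main obstacle here: the entire content of the statement is the reduction $\mathcal{F}(P) = \{\emptyset, P\}$, which is immediate from the definition of poor. The only fact borrowed from elsewhere is the equality $\chi(P) = \chi(M)$, which the excerpt has already invoked in the sentence just before the proposition, so it can be quoted without further argument.
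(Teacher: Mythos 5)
Your proof is correct and follows essentially the same route as the paper: poorness forces $\mathcal{F}(P)=\{\emptyset,P\}$, and the result follows from $w_{\varepsilon}(\emptyset)=1$, $w_{\varepsilon}(P)=(-1)^{V}\varepsilon^{\chi(M)-V}$ together with $\chi(P)=\chi(M)$. The extra detail you supply (evaluating $w_{\varepsilon}(\emptyset)$ explicitly) is harmless and matches the paper's intent.
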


\begin{proof}
Let $P$ be a poor special spine of $M$ with $V$ true vertices. Then ${\mathcal F} (P) = \{ \emptyset, P \}$. The assertion follows from the facts that $w_{\varepsilon}(\emptyset) = 1$ and $w_{\varepsilon}(P) = (-1)^{V}\varepsilon^{\chi(M)-V}$.
 \end{proof}
 
\begin{proposition}  \label{prop4} 
Every compact $3$-manifold $M$ with connected nonempty boundary has a finite number of poor special spines. All poor special spines of the manifold $M$ have the same number of true vertices.
 \end{proposition}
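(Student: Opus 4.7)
The plan is to leverage Proposition~\ref{prop-formula}, which expresses $t(M)$ in terms of the number $V$ of true vertices of any poor special spine, together with the fact that $t(M)$ depends only on $M$.

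For the equal-vertex-count assertion, suppose $P_1$ and $P_2$ are poor special spines of $M$ with $V_1$ and $V_2$ true vertices, respectively. Proposition~\ref{prop-formula} gives
\[
(-1)^{V_1}\varepsilon^{\chi(M)-V_1} + 1 \;=\; t(M) \;=\; (-1)^{V_2}\varepsilon^{\chi(M)-V_2} + 1.
\]
I would cancel the additive $1$ and the positive factor $\varepsilon^{\chi(M)}$, then take absolute values to reduce the equation to $\varepsilon^{-V_1} = \varepsilon^{-V_2}$. Since $\varepsilon > 1$, the map $V \mapsto \varepsilon^{-V}$ is strictly decreasing, forcing $V_1 = V_2 =: V$.

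For the finiteness assertion, I would argue combinatorially. Any special polyhedron $P$ with $V$ true vertices has a singular graph $SP$ that is $4$-regular on $V$ vertices, hence with $E = 2V$ open $1$-strata. The identity $\chi(P) = \chi(M)$ (valid since $\partial M \neq \emptyset$) then pins the number of $2$-components at $F = \chi(M) + V$. Each $2$-component is an open disk attached along a cyclic word in the edges of $SP$, and since every open $1$-stratum carries three $2$-cell germs, the total length of the attaching words equals $3E = 6V$. There are only finitely many $4$-regular graphs on $V$ vertices and, for each such graph, only finitely many ways to attach $F$ disks by cyclic edge-words of total length $6V$. Hence only finitely many homeomorphism classes of special polyhedra with $V$ true vertices and Euler characteristic $\chi(M)$ exist, and in particular $M$ has only finitely many poor special spines.

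The step I expect to be most delicate is the last one: verifying that the combinatorial data of the $4$-regular graph plus the collection of cyclic attaching words (modulo their inherent cyclic symmetries) actually determines the special polyhedron up to homeomorphism, rather than merely up to a coarser equivalence. This is standard within Matveev's framework, so a tidier route is simply to invoke the known finiteness of special polyhedra with a prescribed number of true vertices.
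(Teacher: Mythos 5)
Your proposal is correct and follows essentially the same route as the paper: the equal-vertex-count claim is derived from Proposition~\ref{prop-formula} exactly as in the paper (your absolute-value step just makes explicit what the paper leaves as ``whence $V_1=V_2$''), and for finiteness the paper likewise simply invokes Matveev's theorem on the finiteness of special spines with a fixed number of true vertices, which is the ``tidier route'' you name at the end. Your combinatorial sketch of that finiteness statement is a reasonable outline, and you correctly identify its one delicate point, but it is not needed given the citation.
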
 
 
\begin{proof} 
Let $P_{1}$ and $P_{2}$ be poor special spines of the manifold $M$ that have $V_{1}$ and $V_{2}$ true vertices, respectively. Let us calculate the value of $t(M)$ by the formula from Proposition~\ref{prop-formula} in two ways, using the spines $P_{1} $ and $P_{2}$. Then   
$$
(-1)^{V_{1}}\varepsilon^{\chi(M)-V_{1}}  =  (-1)^{V_{2}}\varepsilon^{\chi(M)-V_{2}}, 
$$
whence $V_{1} = V_{2}$. Thus, all poor special spines of the manifold $M$ have the same number of true vertices.

The finiteness of the number of poor special spines of the manifold $M$ follows from the theorem on the finiteness of the number of special spines with a fixed number of true vertices~\cite{1}. 
\end{proof}

\section{Complexity of hyperbolic manifolds with poor spines} \label{sec5}

Introduce a  class of manifolds ${\mathcal M}^{k}_{n}$, where $k \geq 1$  and $n \geq 1$.  

\begin{definition}{\rm 
A connected compact orientable hyperbolic $3$-manifold $M$ with nonempty totally
geodesic boundary belongs to the class ${\mathcal M}^{k}_{n}$ if it has a poor special spine with the number of
2-components equal to $k$ and the number of true vertices equal to $n$.
}
\end{definition} 

\begin{proposition} \label{prop5}
If $k \geq n$, then ${\mathcal M}^{k}_{n} = \emptyset$. 
\end{proposition}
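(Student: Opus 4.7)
The plan is to argue by an Euler characteristic calculation that compares an invariant of $M$ determined by its boundary genus with a combinatorial expression in $n$ and $k$ coming from a hypothetical poor special spine.

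First, I would invoke the corollary to Proposition~3: any manifold possessing a poor special spine has connected boundary (the only exception, the lens space $L_{5,2}$, is ruled out since $M$ is assumed to have nonempty boundary). Since $M$ is hyperbolic with totally geodesic boundary, this single boundary component is a closed hyperbolic surface, hence has genus $g\geq 2$. Consequently $\chi(\partial M)=2-2g\leq -2$, and the standard identity for compact 3-manifolds gives $\chi(M)=\tfrac12\chi(\partial M)=1-g\leq -1$.

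Next I would compute $\chi(P)$ for a special spine $P\subset M$ with $n$ true vertices and $k$ 2-components, using that $P$ is a cell complex whose open cells are exactly the strata. The singular graph $SP$ is 4-regular (each true vertex is incident to four 1-strata), so if $e$ denotes the number of 1-strata then $2e=4n$, i.e.\ $e=2n$. Therefore
\begin{equation*}
\chi(P)=n-2n+k=k-n.
\end{equation*}
Since $P$ is a spine, $\chi(P)=\chi(M)$, so $k-n=1-g\leq -1$, which yields $n\geq k+1>k$. This directly contradicts the hypothesis $k\geq n$, hence $\mathcal{M}^{k}_{n}=\emptyset$.

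There is essentially no obstacle here: the argument is purely a count of cells combined with the Gauss--Bonnet-type identity $\chi(M)=\tfrac12\chi(\partial M)$. The one place where I would be careful is ensuring the boundary has exactly one component (so that the genus is well-defined and at least 2); this is where the earlier results on poor spines and on totally geodesic boundary in hyperbolic 3-manifolds are used in an essential way.
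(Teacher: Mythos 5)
Your proof is correct and follows essentially the same route as the paper: both compute $\chi(M)=\chi(P)=k-n$ from the cell structure of the spine and conclude $k<n$ from $\chi(M)=\tfrac12\chi(\partial M)<0$ for a manifold with totally geodesic boundary. Your extra step establishing connectedness of $\partial M$ is not actually needed (every component of a totally geodesic boundary is a closed hyperbolic surface of negative Euler characteristic, so $\chi(\partial M)<0$ in any case), but it does no harm.
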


\begin{proof}
Suppose that the class ${\mathcal M}^{k}_{n}$ is nonempty and contains a manifold $M$. Then $M$ has a poor special spine $P$ with the number of $2$-components equal to $k$ and the number of true vertices equal to $n$. Hence, $\chi(M) = \chi(P) = k-n$. Since the boundary $\partial M$ is totally geodesic and $\chi(M) = \frac{1}{2} \chi(\partial M)$, we have $\chi(M) < 0$. Thus, $k<n$. 
\end{proof}

\begin{remark} {\rm 
By definition, for every $n \geq 2$, the class of manifolds  ${\mathcal M}^{1}_{n}$  coincides with the class of manifolds  ${\mathcal M}_{n}$ from Example~\ref{example1}. Hence, if $M \in {\mathcal M}^{1}_{n}$, then $c(M) = n$. }
\end{remark}

The following theorem establishes exact values of complexity for manifolds in the classes ${\mathcal M}^{2}_{n}$.  

\begin{theorem} \label{theorem5}
If $M \in {\mathcal M}^{2}_{n}$, $n \geq 3$, then $c(M)=n$.
\end{theorem}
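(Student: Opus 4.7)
The plan is to prove the two inequalities $c(M) \leq n$ and $c(M) \geq n$ separately. The upper bound is immediate: by assumption $M \in {\mathcal M}^{2}_{n}$ admits a special, hence almost simple, spine $P$ with $n$ true vertices, so by definition $c(M) \leq n$.

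For the lower bound I would apply Theorem~3, which is available because $M$ is hyperbolic: the complexity of $M$ is realised on some special spine $P'$; write $V' = c(M)$ for its number of true vertices and $k'$ for its number of 2-components. Using $\chi(M) = \chi(P') = k' - V'$ and computing $\chi(M) = \chi(P) = 2 - n$ from the given spine $P$, I obtain the key relation
\[
V' = k' + n - 2.
\]
Thus $c(M) \geq n$ is equivalent to $k' \geq 2$, and the task reduces to excluding the case $k' = 1$.

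The main obstacle is this exclusion, which is where Proposition~\ref{prop4} and Lemma~\ref{lemma1} will do the real work. First observe that $\partial M$ is connected, since otherwise Proposition~3 forbids any poor special spine, contradicting the existence of $P$. Now suppose for contradiction that $k' = 1$. Then by Lemma~\ref{lemma1} the minimal special spine $P'$ is itself poor. Proposition~\ref{prop4} then forces $V' = n$, both $P$ and $P'$ being poor special spines of the same manifold with connected nonempty boundary. But the Euler characteristic identity gives $V' = 1 + n - 2 = n - 1$, a contradiction. Therefore $k' \geq 2$, so $V' \geq n$, and combined with the upper bound this yields $c(M) = n$. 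Note that the hypothesis $n \geq 3$ is exactly what is needed for the class ${\mathcal M}^{2}_{n}$ to be nonempty by Proposition~\ref{prop5}.
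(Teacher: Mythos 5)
Your proposal is correct and follows essentially the same route as the paper: compare Euler characteristics of the given poor spine and a minimal special spine supplied by Theorem~3, then rule out the case of one 2-component by combining Lemma~\ref{lemma1} with Proposition~\ref{prop4}. Your explicit check that $\partial M$ is connected (via Proposition~3) before invoking Proposition~\ref{prop4} is a small point the paper leaves implicit, but otherwise the arguments coincide.
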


\begin{proof}
Since $M \in {\mathcal M}^{2}_{n}$, there exists a poor special spine  $P$ of $M$ with two $2$-components and $n$ true vertices. Thus, $\chi(M) = \chi(P) = 2-n$. By Theorem~3, the manifold $M$ has a special spine $Q$ with $c(M)$ true vertices. Denote by $m \geq 1$ the number $2$-components of $Q$. Then $\chi(M) = \chi (Q) = m - c(M)$. Comparing the expressions for  $\chi(M)$, we obtain  
\begin{equation}
m - 2 = c(M) - n . \label{eq-m}
\end{equation}
Since $c(M) \leq n$, it follows that $m \leq 2$. Let us verify $m \neq 1$. Indeed, if $m=1$, then, by Lemma~1, the special spine $Q$ is poor. In this case, it follows from equality~(\ref{eq-m}) that $c(M) = n-1$, i.e., the poor special spines  $Q$ and $P$ have different numbers of true vertices. This contradicts Proposition~4. 

Thus, $m=2$. Hence, $c(M)= n$. 
 \end{proof}

\begin{remark} {\rm 
Recall the definition of a class of manifolds ${\mathcal H}_{p,q}$ introduced in~\cite{8}. Namely, a connected orientable hyperbolic $3$-manifold $M$ with totally geodesic boundary belongs to the class ${\mathcal H}_{p,q}$ if the following conditions are satisfied:
\begin{itemize}
\item[(1)] the boundary $\partial M$ consists of $q$ components; 
\item[(2)] $M$ has a special spine with the number of $2$-components equal to $p$, and $M$ has no special spines with a smaller number of $2$-components.  
\end{itemize} 
The definition of a poor special spine and Theorem~4 imply that ${\mathcal M}^{2}_{n} \subset {\mathcal H}_{2,1}$. }
\end{remark}

\section{Examples of manifolds in the classes ${\mathcal M}^{2}_{n}$} \label{sec6} 

By Proposition~5, the classes ${\mathcal M}^{2}_{1}$ and ${\mathcal M}^{2}_{2}$ are empty. The classes ${\mathcal M}^{2}_{3}$ and ${\mathcal M}^{2}_{4}$ are not empty. Indeed, Table~1 shows that the class ${\mathcal M}^{2}_{3}$ contains at least $45$ manifolds, and Table~2 shows that the class ${\mathcal M}^{2}_{4}$ contains at least $1421$ manifolds. Below we show that the classes ${\mathcal M}^{2}_{n}$ are nonempty for infinitely many values of $n$. 
  
Let $s\geq 0$ be an integer and $n = 5+4s$. We construct a plane $4$-regular graph $G_{n}$ with decoration of vertices and edges as follows. The graph $G_{n}$ has $n$ vertices, two loops, and $n-1$ double edges. At each vertex of the graph, over- and under-passing are specified (just as at a double point in a knot diagram), and each edge is assigned an element of the cyclic group $\mathbb Z_{3} = \{ 0,1,2 \}$. The decorated graph $G_{5}$ is shown in Fig.~\ref{figG5}. 
\begin{figure}[h]
\begin{center}
\special{em:linewidth 1.4pt} \unitlength=.45mm
\begin{picture}(0,50)(0,0)
\thicklines
\put(-80,20){\circle*{3}}
\put(-40,20){\circle*{3}}
\put(0,20){\circle*{3}}
\put(40,20){\circle*{3}}
\put(80,20){\circle*{3}}
\qbezier(-80,20)(-60,40)(-43,23)
\qbezier(-77,17)(-60,0)(-40,20)
\qbezier(-40,20)(-20,40)(-3,23)
\qbezier(-37,17)(-20,0)(0,20)
\qbezier(0,20)(20,40)(37,23)
\qbezier(3,17)(20,0)(40,20)
\qbezier(40,20)(60,40)(77,23)
\qbezier(43,17)(60,0)(80,20)
\qbezier(-80,20)(-80,10)(-90,10)
\qbezier(-90,10)(-100,10)(-100,20)
\qbezier(-100,20)(-100,30)(-90,30)
\qbezier(-90,30)(-83,30)(-81,24)
\qbezier(80,20)(80,30)(90,30)
\qbezier(90,30)(100,30)(100,20)
\qbezier(100,20)(100,10)(90,10)
\qbezier(90,10)(83,10)(81,16)
\qbezier(-40,40)(-40,37)(-40,35)
\qbezier(-40,30)(-40,27)(-40,25)
\qbezier(-40,15)(-40,15)(-40,10)
\qbezier(-40,5)(-40,5)(-40,0)
\qbezier(40,40)(40,37)(40,35)
\qbezier(40,30)(40,27)(40,25)
\qbezier(40,15)(40,15)(40,10)
\qbezier(40,5)(40,5)(40,0)
\put(-105,20){\makebox(0,0)[c]{$1$}}
\put(105,20){\makebox(0,0)[c]{$1$}}
\put(-60,35){\makebox(0,0)[c]{$0$}}
\put(-60,5){\makebox(0,0)[c]{$1$}}
\put(-20,35){\makebox(0,0)[c]{$1$}}
\put(-20,5){\makebox(0,0)[c]{$1$}}
\put(20,35){\makebox(0,0)[c]{$0$}}
\put(20,5){\makebox(0,0)[c]{$0$}}
\put(60,35){\makebox(0,0)[c]{$0$}}
\put(60,5){\makebox(0,0)[c]{$1$}}
\end{picture}
\end{center} \caption{Decorated graph $G_{5}$.} \label{figG5}
\end{figure}
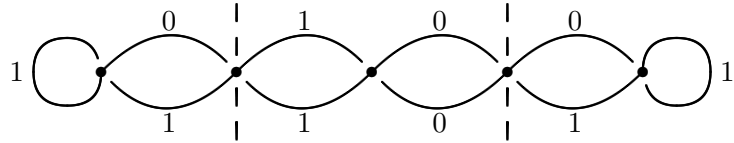
%%%
The graph $G_{5}$ has a block structure: it is composed of three subgraphs $A$, $C$, and $E$ shown in Fig.~\ref{figG5block}. We express this fact as $G_{5} = A \cdot C \cdot E$. Each of graphs $A$ and $E$ has one $4$-valent vertex and one $2$-valent vertex. The graph $C$ has one $4$-valent vertex and two $2$-valent vertices. The decoration of the vertices and edges of the graphs $A$, $C$, and $E$ are induced by the decoration of the graph  $G_{5}$. 
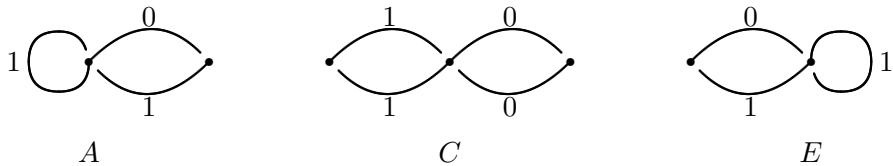
\begin{figure}[h]
\begin{center}
\special{em:linewidth 1.4pt} \unitlength=.4mm
\begin{picture}(0,60)(0,-10)
\thicklines
\put(-40,20){\circle*{3}}
\put(0,20){\circle*{3}}
\put(40,20){\circle*{3}}
\qbezier(-40,20)(-20,40)(-3,23)
\qbezier(-37,17)(-20,0)(0,20)
\qbezier(0,20)(20,40)(37,23)
\qbezier(3,17)(20,0)(40,20)
%left
\put(-80,20){\circle*{3}}
\put(-120,20){\circle*{3}}
\qbezier(-120,20)(-120,10)(-130,10)
\qbezier(-130,10)(-140,10)(-140,20)
\qbezier(-140,20)(-140,30)(-130,30)
\qbezier(-130,30)(-123,30)(-121,24)
\qbezier(-120,20)(-100,40)(-83,23)
\qbezier(-117,17)(-100,0)(-80,20)
\put(-120,-10){\makebox(0,0)[c]{$A$}}
\put(-145,20){\makebox(0,0)[c]{$1$}}
\put(-100,35){\makebox(0,0)[c]{$0$}}
\put(-100,5){\makebox(0,0)[c]{$1$}}
%right
\put(80,20){\circle*{3}}
\put(120,20){\circle*{3}}
\qbezier(120,20)(120,30)(130,30)
\qbezier(130,30)(140,30)(140,20)
\qbezier(140,20)(140,10)(130,10)
\qbezier(130,10)(123,10)(121,16)
\put(145,20){\makebox(0,0)[c]{$1$}}
\put(120,-10){\makebox(0,0)[c]{$E$}}
\put(100,35){\makebox(0,0)[c]{$0$}}
\put(100,5){\makebox(0,0)[c]{$1$}}
\qbezier(80,20)(100,40)(117,23)
\qbezier(83,17)(100,0)(120,20)
%%%
\put(0,-10){\makebox(0,0)[c]{$C$}}
\put(-20,35){\makebox(0,0)[c]{$1$}}
\put(-20,5){\makebox(0,0)[c]{$1$}}
\put(20,35){\makebox(0,0)[c]{$0$}}
\put(20,5){\makebox(0,0)[c]{$0$}}
\end{picture}
\end{center} \caption{Subgraphs $A$, $C$, and $E$ of the graph $G_{5}$.} \label{figG5block}
\end{figure}

Denote graphs $B$ and $D$ as shown in Fig.~\ref{figBDblocks}. The graphs $B$ and $D$ have the same combinatorial structure as the graph $C$ and the same decoration of vertices, however, they differ by the decorations of edges. 
\begin{figure}[h]
\begin{center}
\special{em:linewidth 1.4pt} \unitlength=.4mm
\begin{picture}(0,60)(0,-10)
\thicklines
%left
\put(-120,20){\circle*{3}}
\put(-80,20){\circle*{3}}
\put(-40,20){\circle*{3}}
\qbezier(-120,20)(-100,40)(-83,23)
\qbezier(-117,17)(-100,0)(-80,20)
\qbezier(-80,20)(-60,40)(-43,23)
\qbezier(-77,17)(-60,0)(-40,20)
\put(-80,-10){\makebox(0,0)[c]{$B$}}
\put(-100,35){\makebox(0,0)[c]{$1$}}
\put(-100,5){\makebox(0,0)[c]{$1$}}
\put(-60,35){\makebox(0,0)[c]{$0$}}
\put(-60,5){\makebox(0,0)[c]{$1$}}
%right
\put(40,20){\circle*{3}}
\put(80,20){\circle*{3}}
\put(120,20){\circle*{3}}
\qbezier(40,20)(60,40)(77,23)
\qbezier(43,17)(60,0)(80,20)
\qbezier(80,20)(100,40)(117,23)
\qbezier(83,17)(100,0)(120,20)
\put(80,-10){\makebox(0,0)[c]{$D$}}
\put(60,35){\makebox(0,0)[c]{$0$}}
\put(60,5){\makebox(0,0)[c]{$1$}}
\put(100,35){\makebox(0,0)[c]{$1$}}
\put(100,5){\makebox(0,0)[c]{$0$}} 
\end{picture}
\end{center} \caption{Graphs $B$ and $D$.} \label{figBDblocks}
\end{figure}
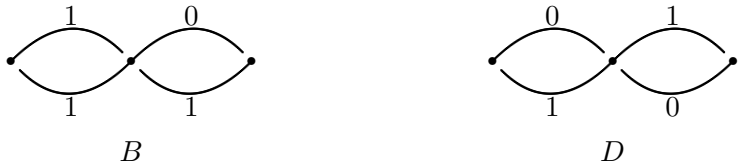
%%%

Define a decorated graph $G_{n}$ as a graph composed successively of the subgraph $A$, $s$ copies of the subgraph $B$, the subgraph $C$, $s$ copies of the subgraph $D$, and the subgraph $E$. In other words, $G_{n} = A \cdot B^{s} \cdot C \cdot D^{s} \cdot E$. The graph $G_{9}$ is demonstrated in Fig.~\ref{figG9}. 
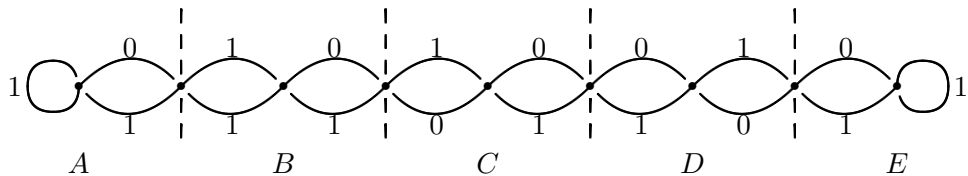
\begin{figure}[h]
\begin{center}
\special{em:linewidth 1.4pt} \unitlength=.34mm
\begin{picture}(0,60)(0,-10)
\thicklines
\put(-160,20){\circle*{3}}
\put(-120,20){\circle*{3}}
\put(-80,20){\circle*{3}}
\put(-40,20){\circle*{3}}
\put(0,20){\circle*{3}}
\put(40,20){\circle*{3}}
\put(80,20){\circle*{3}}
\put(120,20){\circle*{3}}
\put(160,20){\circle*{3}} 
\qbezier(-160,20)(-140,40)(-123,23)
\qbezier(-157,17)(-140,0)(-120,20)
\qbezier(-120,20)(-100,40)(-83,23)
\qbezier(-117,17)(-100,0)(-80,20)
\qbezier(-80,20)(-60,40)(-43,23)
\qbezier(-77,17)(-60,0)(-40,20)
\qbezier(-40,20)(-20,40)(-3,23)
\qbezier(-37,17)(-20,0)(0,20)
\qbezier(0,20)(20,40)(37,23)
\qbezier(3,17)(20,0)(40,20)
\qbezier(40,20)(60,40)(77,23)
\qbezier(43,17)(60,0)(80,20)
\qbezier(80,20)(100,40)(117,23)
\qbezier(83,17)(100,0)(120,20)
\qbezier(120,20)(140,40)(157,23)
\qbezier(123,17)(140,0)(160,20)
%круг
\qbezier(-160,20)(-160,10)(-170,10)
\qbezier(-170,10)(-180,10)(-180,20)
\qbezier(-180,20)(-180,30)(-170,30)
\qbezier(-170,30)(-163,30)(-161,24)
%круг
\qbezier(160,20)(160,30)(170,30)
\qbezier(170,30)(180,30)(180,20)
\qbezier(180,20)(180,10)(170,10)
\qbezier(170,10)(163,10)(161,16)
%пунктиры
\qbezier(-120,50)(-120,50)(-120,45)
\qbezier(-120,40)(-120,37)(-120,35)
\qbezier(-120,30)(-120,27)(-120,25)
\qbezier(-120,15)(-120,15)(-120,10)
\qbezier(-120,5)(-120,5)(-120,0)
\qbezier(-40,50)(-40,50)(-40,45)
\qbezier(-40,40)(-40,37)(-40,35)
\qbezier(-40,30)(-40,27)(-40,25)
\qbezier(-40,15)(-40,15)(-40,10)
\qbezier(-40,5)(-40,5)(-40,0)
\qbezier(40,50)(40,50)(40,45)
\qbezier(40,40)(40,37)(40,35)
\qbezier(40,30)(40,27)(40,25)
\qbezier(40,15)(40,15)(40,10)
\qbezier(40,5)(40,5)(40,0)
\qbezier(120,50)(120,50)(120,45)
\qbezier(120,40)(120,37)(120,35)
\qbezier(120,30)(120,27)(120,25)
\qbezier(120,15)(120,15)(120,10)
\qbezier(120,5)(120,5)(120,0)
\put(-160,-10){\makebox(0,0)[c]{$A$}}
\put(-80,-10){\makebox(0,0)[c]{$B$}}
\put(0,-10){\makebox(0,0)[c]{$C$}}
\put(80,-10){\makebox(0,0)[c]{$D$}}
\put(160,-10){\makebox(0,0)[c]{$E$}}
\put(-185,20){\makebox(0,0)[c]{$1$}}
\put(185,20){\makebox(0,0)[c]{$1$}}
\put(-140,35){\makebox(0,0)[c]{$0$}}
\put(-140,5){\makebox(0,0)[c]{$1$}}
\put(-100,35){\makebox(0,0)[c]{$1$}}
\put(-100,5){\makebox(0,0)[c]{$1$}}
\put(-60,35){\makebox(0,0)[c]{$0$}}
\put(-60,5){\makebox(0,0)[c]{$1$}}
\put(-20,35){\makebox(0,0)[c]{$1$}}
\put(-20,5){\makebox(0,0)[c]{$0$}}
\put(20,35){\makebox(0,0)[c]{$0$}}
\put(20,5){\makebox(0,0)[c]{$1$}}
\put(60,35){\makebox(0,0)[c]{$0$}}
\put(60,5){\makebox(0,0)[c]{$1$}}
\put(100,35){\makebox(0,0)[c]{$1$}}
\put(100,5){\makebox(0,0)[c]{$0$}}
\put(140,35){\makebox(0,0)[c]{$0$}}
\put(140,5){\makebox(0,0)[c]{$1$}} 
\end{picture}
\end{center} \caption{Decorated graph $G_{9}$.} \label{figG9}
\end{figure}

Note that the graphs $G_{n}$ belong to the class of o-graphs defined in~\cite{23}. In that paper, R.~Benedetti and C.~Petronio presented an algorithm for constructing a special polyhedron from an arbitrary o-graph and proved that such a polyhedron is a special spine of a compact orientable $3$-manifold with nonempty boundary. Let $W_{n}$ and $P_{n}$ be a manifold and its special spine, respectively, that are constructed from the o-graph $G_{n}$ with the use of the algorithm from~\cite{23}.

\begin{theorem} \label{theoremexample}
For every $n = 5 + 4 s$, where $s$ is a nonnegative integer, the manifold $W_{n}$ belongs to the class ${\mathcal M}^{2}_{n}$. 
\end{theorem}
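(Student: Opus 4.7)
My plan is to verify directly the four defining properties of the class $\mathcal{M}^2_n$ for $W_n$: that $W_n$ is a compact orientable $3$-manifold with nonempty boundary, that $P_n$ has $n$ true vertices and exactly two $2$-components, that $P_n$ is poor, and that $W_n$ is hyperbolic with totally geodesic boundary. The first property and the vertex count are immediate from the Benedetti--Petronio algorithm of~\cite{23}: applied to any o-graph it returns a compact orientable $3$-manifold with nonempty boundary together with a special spine whose true vertices are in bijection with the vertices of the o-graph, and $G_n$ has $n$ vertices by construction.

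To count $2$-components, I would describe them as the orbits of the natural permutation on the edge-ends of $G_n$ produced by the vertex over-/under-crossings and the $\mathbb{Z}_3$ edge labels, i.e., the combinatorial prescription from~\cite{23} for tracing the boundaries of $2$-cells. I would first treat the base case $s=0$, $G_5 = A \cdot C \cdot E$, by an explicit traversal and verify directly that there are exactly two orbits. For the inductive step, I would show that splicing one copy of $B$ followed by one copy of $D$ into the graph (so replacing $G_n$ by $G_{n+4}$) preserves the number of orbits; this reduces to a local check at the two splicing interfaces, using the prescribed crossings at vertices and the edge labels chosen specifically in $B$ and $D$. Hence $P_n$ has exactly two $2$-components $\xi_1$ and $\xi_2$ for every admissible $n$.

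For poorness, any proper simple subpolyhedron of $P_n$ is the closure of a union of a proper subset of the $2$-components; with only two $2$-components the only candidates are $\overline{\xi_1}$ and $\overline{\xi_2}$. It therefore suffices to exhibit in each closure a point whose link is not of type (a), (b), or (c). The link of a true vertex of $P_n$ is the complete graph $K_4$, and its four triangular regions correspond to the germs of $2$-components at that vertex; the o-graph decorations prescribe which germs belong to $\xi_1$ and which to $\xi_2$. I would check, vertex by vertex in each block, that the germs of $\xi_i$ at some vertex form a subgraph of $K_4$ which is neither a circle, nor a theta-graph, nor the whole $K_4$. This prevents $\overline{\xi_i}$ from being a simple polyhedron and hence establishes poorness.

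The main obstacle is hyperbolicity with totally geodesic boundary. My approach uses the ideal triangulation $\tau_n$ of $W_n$ dual to $P_n$, consisting of $n$ tetrahedra with exactly two edges, one for each $2$-component. Since each tetrahedron contributes $6$ internal edges, the sum of the tetrahedron-edge incidence numbers of the two edges equals $6n$; to glue from copies of a single regular truncated hyperbolic tetrahedron $T^{*}_\theta$ one needs both incidence numbers equal to $3n$, and the angle-sum condition around interior edges then forces $\theta = 2\pi/(3n)$, which lies in the admissible range $(0, \pi/3)$ for every $n \geq 5$. I would verify the equidistribution of edge incidences block by block in $G_n$, again by induction on $s$. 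Given this, a consistent gluing of $n$ copies of $T^{*}_\theta$ yields a complete hyperbolic metric on $W_n$ whose dihedral angles sum to $2\pi$ at each internal edge and whose truncation triangles meet the lateral hexagons at right angles, producing the required totally geodesic boundary. This places $W_n$ in $\mathcal{M}^2_n$.
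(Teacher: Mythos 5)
Your proposal follows essentially the same route as the paper's proof: counting true vertices via the o-graph, tracing the boundary curves of the blocks to obtain exactly two $2$-components, ruling out each single $2$-component closure as a simple subpolyhedron to establish poorness, and realizing the dual ideal triangulation by $n$ copies of the regular truncated tetrahedron $T^{*}_{2\pi/(3n)}$ with two edge classes of $3n$ incidences each to get the hyperbolic structure with totally geodesic boundary. If anything, your inductive organization of the combinatorial checks (explicit base case $G_{5}$, splicing in $B\cdot D$ pairs) and your link-of-a-vertex criterion for non-simplicity supply detail that the paper dismisses with ``it can be easily verified'' and ``combinatorial analysis shows''.
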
 

\begin{proof}
In accordance with the definition of the class ${\mathcal M}^{2}_{n}$, we show that 
\begin{itemize}
\item[(1)]  the special polyhedron $P_{n}$ has $n$ true vertices and two $2$-components and is poor;  
\item[(2)] he manifold $W_{n}$ is a hyperbolic manifold with totally geodesic boundary.
\end{itemize}

Let us prove assertion (1). According to the algorithm from~\cite{23} for constructing a spine by an o-graph, to obtain a spine corresponding to the graph $G_{n}$, one should replace the subgraphs $A$, $B$, $C$, $D$, and $E$ by the similarly named blocks shown in Fig.~\ref{fig101} and~\ref{fig102}. As a result of such a join of blocks, we obtain the spine  $P_{n}$. 
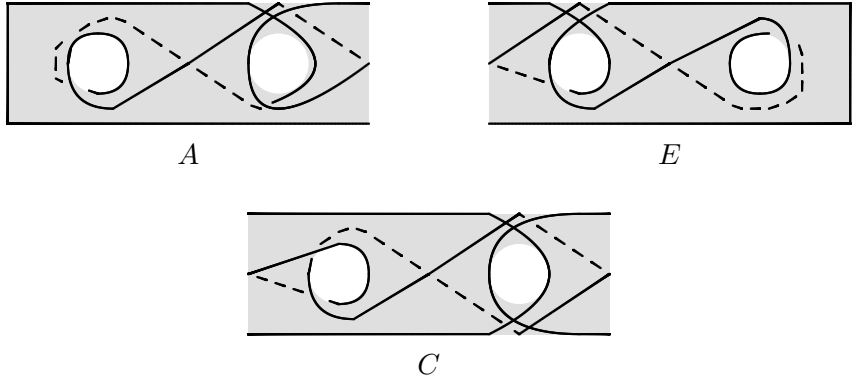
\begin{figure}[h]
\begin{center}
\special{em:linewidth 1.4pt} \unitlength=.4mm
\begin{picture}(0,100)(0,0)
\thicklines
%left
\put(-80,70){\begin{picture}(0,50)(0,-10)
\put(-60,0){\begin{tikzpicture}
  \begin{scope} 
    \fill[gray!25]
    (0mm,0mm) rectangle (24mm,16mm) ;
      \clip (12mm,8mm) circle (4mm);
    \fill[white] (0,0) rectangle (24mm,16mm);
	  \end{scope}
\end{tikzpicture}
}
\put(0,0){\begin{tikzpicture}
  \begin{scope} 
    \fill[gray!25]
    (0mm,0mm) rectangle (24mm,16mm) ;
      \clip (12mm,8mm) circle (4mm);
    \fill[white] (0,0) rectangle (24mm,16mm);
	  \end{scope}
\end{tikzpicture}
}
\qbezier(-60,0)(-60,0)(60,0)
\qbezier(-60,40)(-60,40)(20,40)
\qbezier(28,7)(60,20)(20,40)
\qbezier(-60,0)(-60,0)(-60,40)
\qbezier(-40,20)(-40,30)(-30,30)
\qbezier(-30,30)(-20,30)(-20,20)
\qbezier(-20,20)(-20,10)(-30,10)
\qbezier(-30,10)(-30,10)(-33,11)
\qbezier(-42,14)(-42,14)(-44,15)
\qbezier(-44,18)(-44,18)(-44,22)
\qbezier(-44,25)(-44,25)(-42,27)
\qbezier(30,5)(40,5)(60,20)
\qbezier(30,5)(20,5)(20,20)
\qbezier(50,40)(20,40)(20,20)
\qbezier(60,40)(60,40)(50,40)
\qbezier(24,5)(24,5)(21,6)
\qbezier(18,8)(18,8)(15,10)
\qbezier(12,12)(12,12)(9,14)
\qbezier(6,16)(6,16)(3,18)
\qbezier(30,40)(30,40)(31.5,39)
\qbezier(36,36)(36,36)(39,34)
\qbezier(42,32)(42,32)(45,30)
\qbezier(48,28)(48,28)(51,26)
\qbezier(54,24)(54,24)(57,22)
\qbezier(30,40)(30,40)(0,20)
\qbezier(0,20)(0,20)(-25,5)
\qbezier(-40,20)(-40,5)(-25,5)
\qbezier(-40,20)(-40,20)(-39,25)
\qbezier(-3,22)(-3,22)(-6,24)
\qbezier(-9,26)(-9,26)(-12,28)
\qbezier(-15,30)(-15,30)(-18,32)
\qbezier(-21,34)(-21,34)(-24,35)
\qbezier(-27,35)(-27,35)(-30,34)
\qbezier(-33,33)(-33,33)(-36,31)
\put(0,-10){\makebox(0,0)[c]{$A$}}
\end{picture}}
%center
\put(0,0){\begin{picture}(0,50)(0,-10)
\put(-60,0){\begin{tikzpicture}
  \begin{scope} 
    \fill[gray!25]
    (0mm,0mm) rectangle (24mm,16mm) ;
      \clip (12mm,8mm) circle (4mm);
    \fill[white] (0,0) rectangle (24mm,16mm);
	  \end{scope}
\end{tikzpicture}
}
\put(0,0){\begin{tikzpicture}
  \begin{scope} 
    \fill[gray!25]
    (0mm,0mm) rectangle (24mm,16mm) ;
      \clip (12mm,8mm) circle (4mm);
    \fill[white] (0,0) rectangle (24mm,16mm);
	  \end{scope}
\end{tikzpicture}
}
\qbezier(-60,0)(-60,0)(20,0)
\qbezier(-60,40)(-60,40)(20,40)
\qbezier(20,0)(60,20)(20,40)
\qbezier(-60,20)(-60,20)(-30,30)
\qbezier(-30,30)(-20,30)(-20,20)
\qbezier(-20,20)(-20,10)(-30,10)
\qbezier(-30,10)(-30,10)(-33,11)
\qbezier(-42,14)(-42,14)(-45,15)
\qbezier(-48,16)(-48,16)(-51,17)
\qbezier(-54,18)(-54,18)(-57,19)
\qbezier(60,0)(60,0)(50,0)
\qbezier(50,0)(20,0)(20,20)
\qbezier(50,40)(20,40)(20,20)
\qbezier(60,40)(60,40)(50,40)
\qbezier(60,20)(60,20)(30,0)
\qbezier(30,0)(30,0)(28.5,1)
\qbezier(24,4)(24,4)(21,6)
\qbezier(18,8)(18,8)(15,10)
\qbezier(12,12)(12,12)(9,14)
\qbezier(6,16)(6,16)(3,18)
\qbezier(30,40)(30,40)(31.5,39)
\qbezier(36,36)(36,36)(39,34)
\qbezier(42,32)(42,32)(45,30)
\qbezier(48,28)(48,28)(51,26)
\qbezier(54,24)(54,24)(57,22)
\qbezier(30,40)(30,40)(0,20)
\qbezier(0,20)(0,20)(-25,5)
\qbezier(-40,20)(-40,5)(-25,5)
\qbezier(-40,20)(-40,20)(-39,25)
\qbezier(-3,22)(-3,22)(-6,24)
\qbezier(-9,26)(-9,26)(-12,28)
\qbezier(-15,30)(-15,30)(-18,32)
\qbezier(-21,34)(-21,34)(-24,35)
\qbezier(-27,35)(-27,35)(-30,34)
\qbezier(-33,32)(-33,32)(-36,30)
\put(0,-10){\makebox(0,0)[c]{$C$}}
\end{picture}}
%%% right
\put(80,70){\begin{picture}(0,50)(0,-10)
\thicklines
\put(-60,0){\begin{tikzpicture}
  \begin{scope} 
    \fill[gray!25]
    (0mm,0mm) rectangle (24mm,16mm) ;
      \clip (12mm,8mm) circle (4mm);
    \fill[white] (0,0) rectangle (24mm,16mm);
	  \end{scope}
\end{tikzpicture}
}
\put(0,0){\begin{tikzpicture}
  \begin{scope} 
    \fill[gray!25]
    (0mm,0mm) rectangle (24mm,16mm) ;
      \clip (12mm,8mm) circle (4mm);
    \fill[white] (0,0) rectangle (24mm,16mm);
	  \end{scope}
\end{tikzpicture}
}
\qbezier(-60,40)(-60,40)(-40,40)
\qbezier(-40,40)(-20,30)(-20,20)
\qbezier(-20,20)(-20,10)(-30,10)
\qbezier(-30,10)(-30,10)(-33,11)
\qbezier(-42,14)(-42,14)(-45,15)
\qbezier(-48,16)(-48,16)(-51,17)
\qbezier(-54,18)(-54,18)(-57,19)
\qbezier(60,40)(60,40)(60,0)
\qbezier(60,0)(60,0)(-60,0)
\qbezier(33,30)(20,30)(20,20)
\qbezier(20,20)(20,10)(30,10)
\qbezier(30,10)(40,10)(40,20)
\qbezier(40,20)(40,35)(30,35)
\qbezier(44,23)(44,23)(43,26)
\qbezier(44,17)(44,17)(44,20)
\qbezier(44,14)(44,14)(44,11)
\qbezier(42,9)(42,9)(39,7)
\qbezier(36,6)(36,6)(33,5)
\qbezier(30,5)(30,5)(27,5)
\qbezier(24,5)(24,5)(21,6)
\qbezier(18,8)(18,8)(15,10)
\qbezier(12,12)(12,12)(9,14)
\qbezier(6,16)(6,16)(3,18)
\qbezier(30,35)(30,35)(0,20)
\qbezier(0,20)(0,20)(-25,5)
\qbezier(-40,20)(-40,5)(-25,5)
\qbezier(-40,20)(-40,20)(-39,25)
\qbezier(-3,22)(-3,22)(-6,24)
\qbezier(-9,26)(-9,26)(-12,28)
\qbezier(-15,30)(-15,30)(-18,32)
\qbezier(-21,34)(-21,34)(-24,36)
\qbezier(-27,38)(-27,38)(-30,40)
\qbezier(-30,40)(-30,40)(-60,20)
\qbezier(-40,20)(-40,30)(-20,40)
\qbezier(-20,40)(-20,40)(60,40)
\put(0,-10){\makebox(0,0)[c]{$E$}}
\end{picture}}
\end{picture}
\end{center} \caption{Blocks $A$, $C$, and $E$.} \label{fig101}
\end{figure}
 
\begin{figure}[h]
\begin{center}
\special{em:linewidth 1.4pt} \unitlength=.4 mm
\begin{picture}(0,50)(0,0)
\put(-80,0){\begin{picture}(0,50)(0,-10)
\thicklines
\put(-60,0){\begin{tikzpicture}
  \begin{scope} 
    \fill[gray!25]
    (0mm,0mm) rectangle (24mm,16mm) ;
      \clip (12mm,8mm) circle (4mm);
    \fill[white] (0,0) rectangle (24mm,16mm);
	  \end{scope}
\end{tikzpicture}
}
\put(0,0){\begin{tikzpicture}
  \begin{scope} 
    \fill[gray!25]
    (0mm,0mm) rectangle (24mm,16mm) ;
      \clip (12mm,8mm) circle (4mm);
    \fill[white] (0,0) rectangle (24mm,16mm);
	  \end{scope}
\end{tikzpicture}
}
\qbezier(-60,0)(-60,0)(60,0)
\qbezier(-60,40)(-60,40)(20,40)
\qbezier(28,7)(60,20)(20,40)
\qbezier(-60,20)(-60,20)(-30,30)
\qbezier(-30,30)(-20,30)(-20,20)
\qbezier(-20,20)(-20,10)(-30,10)
\qbezier(-30,10)(-30,10)(-33,11)
\qbezier(-42,14)(-42,14)(-45,15)
\qbezier(-48,16)(-48,16)(-51,17)
\qbezier(-54,18)(-54,18)(-57,19)
\qbezier(30,5)(40,5)(60,20)
\qbezier(30,5)(20,5)(20,20)
\qbezier(50,40)(20,40)(20,20)
\qbezier(60,40)(60,40)(50,40)
\qbezier(24,5)(24,5)(21,6)
\qbezier(18,8)(18,8)(15,10)
\qbezier(12,12)(12,12)(9,14)
\qbezier(6,16)(6,16)(3,18)
\qbezier(30,40)(30,40)(31.5,39)
\qbezier(36,36)(36,36)(39,34)
\qbezier(42,32)(42,32)(45,30)
\qbezier(48,28)(48,28)(51,26)
\qbezier(54,24)(54,24)(57,22)
\qbezier(30,40)(30,40)(0,20)
\qbezier(0,20)(0,20)(-25,5)
\qbezier(-40,20)(-40,5)(-25,5)
\qbezier(-40,20)(-40,20)(-39,25)
\qbezier(-3,22)(-3,22)(-6,24)
\qbezier(-9,26)(-9,26)(-12,28)
\qbezier(-15,30)(-15,30)(-18,32)
\qbezier(-21,34)(-21,34)(-24,35)
\qbezier(-27,35)(-27,35)(-30,34)
\qbezier(-33,32)(-33,32)(-36,30)
\end{picture}
\put(0,-10){\makebox(0,0)[c]{$B$}}
}
\put(80,0){\begin{picture}(0,50)(0,-10)
\thicklines
\put(-60,0){\begin{tikzpicture}
  \begin{scope} 
    \fill[gray!25]
    (0mm,0mm) rectangle (24mm,16mm) ;
      \clip (12mm,8mm) circle (4mm);
    \fill[white] (0,0) rectangle (24mm,16mm);
	  \end{scope}
\end{tikzpicture}
}
\put(0,0){\begin{tikzpicture}
  \begin{scope} 
    \fill[gray!25]
    (0mm,0mm) rectangle (24mm,16mm) ;
      \clip (12mm,8mm) circle (4mm);
    \fill[white] (0,0) rectangle (24mm,16mm);
	  \end{scope}
\end{tikzpicture}
}
\qbezier(-60,40)(-60,40)(-40,40)
\qbezier(-40,40)(-20,30)(-20,20)
\qbezier(-20,20)(-20,10)(-30,10)
\qbezier(-30,10)(-30,10)(-33,11)
\qbezier(-42,14)(-42,14)(-45,15)
\qbezier(-48,16)(-48,16)(-51,17)
\qbezier(-54,18)(-54,18)(-57,19)
\qbezier(60,0)(60,0)(50,0)
\qbezier(50,0)(20,0)(20,20)
\qbezier(33,30)(20,30)(20,20)
\qbezier(42,29)(42,29)(44,28)
\qbezier(46,27)(46,27)(48,26)
\qbezier(50,25)(50,25)(52,24)
\qbezier(54,23)(54,23)(56,22)
\qbezier(60,20)(60,20)(30,0)
\qbezier(30,0)(30,0)(28.5,1)
\qbezier(24,4)(24,4)(21,6)
\qbezier(18,8)(18,8)(15,10)
\qbezier(12,12)(12,12)(9,14)
\qbezier(6,16)(6,16)(3,18)
\qbezier(30,35)(30,35)(0,20)
\qbezier(30,35)(55,20)(25,0)
\qbezier(-60,0)(-60,0)(25,0)
\qbezier(0,20)(0,20)(-25,5)
\qbezier(-40,20)(-40,5)(-25,5)
\qbezier(-40,20)(-40,20)(-39,25)
\qbezier(-3,22)(-3,22)(-6,24)
\qbezier(-9,26)(-9,26)(-12,28)
\qbezier(-15,30)(-15,30)(-18,32)
\qbezier(-21,34)(-21,34)(-24,36)
\qbezier(-27,38)(-27,38)(-30,40)
\qbezier(-30,40)(-30,40)(-60,20)
\qbezier(-40,20)(-40,30)(-20,40)
\qbezier(-20,40)(-20,40)(60,40)
\put(0,-10){\makebox(0,0)[c]{$D$}}
\end{picture}}
\end{picture}
\end{center} \caption{Blocks $B$ and $D$.} \label{fig102}
\end{figure}
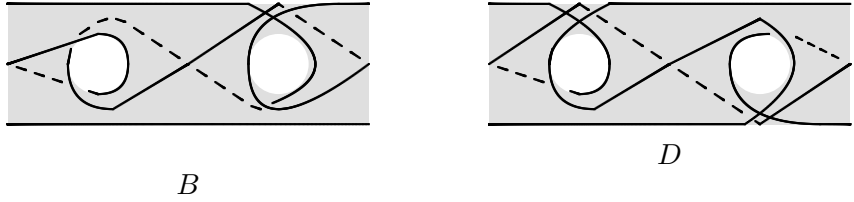

According to the construction algorithm, the number of true vertices of the spine $P_{n}$ is equal to the number of vertices of the o-graph $G_{n}$, i.e., to $n$. The curves shown in the figures of blocks are joined into closed curves. It can be easily verified that for any n the number of closed curves is equal to two. Since these closed curves correspond to $2$-components of the spine, we find that the number of 2-components of the spine $P_{n}$ is equal to two.

Let us show that the special spine $P_{n}$ is poor. If a simple polyhedron  $Q\subset P_{n}$ contains at least one point of a $2$-component $\xi$ of the polyhedron $P_{n}$, then $\xi \subset Q$ by the compactness of simple polyhedra. Hence, to describe a simple subpolyhedron, it suffices to indicate the $2$-components contained in it. Combinatorial analysis shows that none of the two $2$-components of the spine $P_{n}$ defines a simple subpolyhedron.
  
Let us prove assertion (2). Using the duality between special spines of manifolds with boundary and their ideal triangulations, we can assume that $W_{n}$ is glued from $n$ truncated tetrahedra. Denote this triangulation by $\mathcal T$. In the same way as in~\cite{6}, in order to endow $W_{n}$ with a hyperbolic structure, we identify each tetrahedron of the triangulation $\mathcal T$ with a copy of a regular truncated hyperbolic tetrahedron $T^{*}_{2\pi / (3n)}$ with dihedral angles $2\pi / (3n)$. Since all faces of $T^{*}_{2\pi / (3n)}$ are identical symmetric hexagons, each pairwise identification of faces of tetrahedra in $\mathcal T$ can be realized by an isometry that identifies faces of the copies of $T^{*}_{2\pi / (3n)}$. In the construction of the manifold $W_{n}$, all edges of the tetrahedra in ${\mathcal T}$ form two classes of equivalent edges corresponding to two $2$-components of the spine  $P_{n}$. It is easily seen from the construction of the spine $P_{n}$ that the number of edges of the tetrahedra of the triangulation in each equivalence class is $3n$. Since the dihedral angles of the tetrahedron $T^{*}_{2\pi / (3n)}$ are equal to $2\pi / (3n)$, we conclude that $W_{n}$ is a hyperbolic manifold. Since the boundary $\partial W_{n}$ is glued from triangular faces of copies of the truncated hyperbolic tetrahedra $T^{*}_{2\pi / (3n)}$ each of which is orthogonal to the adjacent hexagonal faces, the boundary $\partial W_{n}$ is totally geodesic.
\end{proof}

\begin{remark}{\rm 
Since the manifold $W_{n}$ defined by the spine $P_{n}$ is glued from $n$ regular truncated tetrahedra with dihedral angles $2\pi / (3n)$, its volume is calculated by the formula 
\begin{equation}
\operatorname{vol} (W_{n}) = n \cdot \operatorname{vol} (T^{*}_{2\pi/(3n)}), 
\end{equation} 
where the volume $\operatorname{vol} (T^{*}_{2\pi/(3n)})$ can be found by formula~(\ref{eq1}) as well as by formula~(\ref{eq2}).   }
\end{remark}

\begin{corollary}
The classes ${\mathcal M}^{2}_{n}$ are nonempty for an infinite number of values of $n$. 
\end{corollary}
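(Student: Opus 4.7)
The plan is to deduce the corollary as an immediate consequence of Theorem~\ref{theoremexample}. That theorem asserts that for every nonnegative integer $s$ the manifold $W_{n}$ with $n=5+4s$ lies in the class $\mathcal{M}^{2}_{n}$. In particular, each such class contains at least one manifold and is therefore nonempty.

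Concretely, I would first observe that the arithmetic progression $\{5+4s : s \in \mathbb{Z}_{\geq 0}\} = \{5,9,13,17,\ldots\}$ is an infinite subset of the positive integers. Then, invoking Theorem~\ref{theoremexample}, I would note that for every $n$ in this progression the manifold $W_{n}$ witnesses $\mathcal{M}^{2}_{n}\neq\emptyset$. This directly yields the conclusion that $\mathcal{M}^{2}_{n}$ is nonempty for infinitely many values of $n$.

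There is no genuine obstacle here, as all the work has already been done in Theorem~\ref{theoremexample}, where the construction of the o-graph $G_{n}$, the verification that the associated spine $P_{n}$ is poor with two $2$-components and $n$ true vertices, and the hyperbolicity with totally geodesic boundary of $W_{n}$ have all been established. The corollary is thus a one-line consequence, and the only thing worth emphasizing in the write-up is the explicit identification of the infinite index set $\{5+4s : s\geq 0\}$ of values of $n$ for which nonemptiness has been demonstrated.
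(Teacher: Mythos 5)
Your proposal is correct and matches the paper exactly: the corollary is stated as an immediate consequence of Theorem~\ref{theoremexample}, since the infinite arithmetic progression $n=5+4s$, $s\geq 0$, supplies a manifold $W_{n}\in\mathcal{M}^{2}_{n}$ for each such $n$. Nothing further is needed.
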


\end{document}